\newcommand{\figdraft}{false}%
\newcommand{\figfile}[1]{#1}%
\theoremstyle{plain}%
\newtheorem{theorem}{Theorem}[]%
\newtheorem{lemma}[theorem]{Lemma}%
\newtheorem{assumption}[theorem]{Assumption}%
\newtheorem{remark}[theorem]{Remark}%
\definecolor{colorGreen}{rgb}{0.,0.67,0}
\definecolor{colorRed}{rgb}{0.67,0.,0}
\definecolor{colorBlue}{rgb}{0.,0.,0.67}
\newcommand{\ie}{{i.e.}}
\newcommand{\Len}{L}
\newcommand{\iu}{\mathtt{i}}
\newcommand{\mhexp}[1]{{{\mathtt{e}}^{#1}}}
\newcommand{\sgn}{\mathrm{sgn}}
\newcommand{\fspace}[1]{{\mathsf{#1}}}
\newcommand{\fspaceL}{\fspace{L}}
\newcommand{\fspaceC}{\fspace{C}}
\newcommand{\fspaceW}{\fspace{W}}
\newcommand{\ol}[1]{{\overline{#1}}}
\newcommand{\phase}{{\varphi}}
\newcommand{\Rset}{{\mathbb{R}}}
\newcommand{\Nset}{{\mathbb{N}}}
\newcommand{\oointerval}[2]{(#1,\,#2)}%
\newcommand{\ccinterval}[2]{[#1,\,#2]}%
\newcommand{\sh}{{\rm sh}}
\newcommand{\tdots}{{...}}%
\newlength{\mhpicDwidth}
\newlength{\mhpicDvsep}
\newlength{\mhpicDhsep}
\newlength{\mhpicPwidth}
\newlength{\mhpicPvsep}
\newlength{\mhpicPhsep}
\newlength{\mhpicWhsep}
\newcommand{\pair}[2]{{\left({#1},\,{#2}\right)}}
\newcommand{\skp}[2]{{\left\langle{#1},\,{#2}\right\rangle}}
\newcommand{\at}[1]{{\left({#1}\right)}}
\newcommand{\nat}[1]{(#1)}
\newcommand{\bat}[1]{{\big(#1\big)}}
\newcommand{\Bat}[1]{{\Big(#1\Big)}}
\newcommand{\triple}[3]{{\left({#1},\,{#2},\,{#3}\right)}}
\newcommand{\ul}[1]{\underline{#1}}
\newcommand{\bigpar}{\par\quad\newline\noindent}
\newcommand{\wt}[1]{{\widetilde{#1}}}
\newcommand{\wh}[1]{{\widehat{#1}}}
\newcommand{\jump}[1]{{|\![#1]\!|}}
\newcommand{\mean}[1]{{\langle#1\rangle}}
\newcommand{\norm}[1]{\|{#1}\|}
\newcommand{\abs}[1]{\left|{#1}\right|}
\newcommand{\dint}[1]{\,\mathrm{d}#1}
\newcommand{\Ga}{{\Gamma}}
\newcommand{\eps}{{\varepsilon}}
\newcommand{\la}{{\lambda}}
\newcommand{\si}{{\sigma}}
\newcommand{\MiTime}{t}
\newcommand{\calA}{\mathcal{A}}
\newcommand{\calC}{\mathcal{C}}
\newcommand{\calH}{\mathcal{H}}
\newcommand{\calL}{\mathcal{L}}
\newcommand{\calM}{\mathcal{M}}
\newcommand{\calN}{\mathcal{N}}
\newcommand{\calP}{\mathcal{P}}
\newcommand{\calT}{\mathcal{T}}
\begin{document}%
%
%
\title{Action minimizing fronts in general FPU-type chains }%
\date{\today}%
\author{%
Michael Herrmann%
\thanks{
    Oxford Centre for Nonlinear PDE (OxPDE),
    {\tt{michael.herrmann@maths.ox.ac.uk}}
}%
}%
\maketitle
%
\begin{abstract}%
We study atomic chains with nonlinear nearest neighbour interactions and prove the existence
of fronts (heteroclinic travelling waves with constant asymptotic states). Generalizing
recent results of Herrmann and Rademacher we allow for non-convex interaction potentials and
find fronts with non-monotone profile. These fronts minimize an action integral and can only
exists if the asymptotic states fulfil the macroscopic constraints and if the
interaction potential satisfies a geometric graph condition. Finally, we illustrate our
findings by numerical simulations.
\end{abstract}%
%
%
\quad\newline\noindent%
\begin{minipage}[t]{0.15\textwidth}%
Keywords: %
\end{minipage}%
\begin{minipage}[t]{0.8\textwidth}%
\emph{Fermi-Pasta-Ulam chain}, %
\emph{heteroclinic travelling waves}, \\%
\emph{conservative shocks}, %
\emph{least action principle} %
\end{minipage}%
\medskip
\newline\noindent
\begin{minipage}[t]{0.15\textwidth}%
MSC (2000): %
\end{minipage}%
\begin{minipage}[t]{0.8\textwidth}%
37K60, 
47J30, 
70F45, 
74J30 
\end{minipage}%
%
%
%
%
\setcounter{tocdepth}{5} %
\setcounter{secnumdepth}{4}
{\scriptsize{\tableofcontents}}%

%
%
%
\section{Introduction}
%
Nonlinear Hamiltonian lattices like chains of interacting atoms or coupled oscillators are
ubiquitous in mathematics, physics, and material sciences. The most famous example, and most
elementary model for a crystal, is a chain of identical atoms that interact by nearest
neighbour forces. In reminiscence of the pioneering paper by Fermi, Pasta, and Ulam
\cite{FPU55} one usually refers to such systems as FPU or FPU-type chains.
\par
Although FPU chains are quite simple lattice models they exhibit a rich and complicate
dynamical behaviour, and we still lack a complete understanding of their dynamical
properties. A mayor topic in the analysis of FPU chains is therefore the investigation of
coherent structures such as travelling waves and breathers. Travelling waves are highly
symmetric, exact solutions to the underlying lattice equation. They can be regarded as the
fundamental modes of nonlinear wave propagation and provide much insight into the energy
transport in discrete media. In this paper we aim in contributing to the general theory by
studying fronts, i.e., heteroclinic travelling waves that connect two different constant
states.
\bigpar
The dynamics of FPU chains is governed by the lattice equation
\begin{align}%
\label{Eqn:Intro.FPU1}
\ddot{x}_j=\Phi^\prime\at{x_{j+1}-x_{j}}-\Phi^\prime\at{x_{j}-x_{j-1}}.
\end{align}
Here $x_j=x_j\at{t}$ denotes the position of the $j^{th}$ atom at time $t$, $\Phi$ is the
interaction potential, and the atomic mass is normalized to $1$. Introducing the atomic
distances $r_j=x_{j+1}-x_{j}$ and velocities $v_{j}=\dot{x}_{j}$ we can reformulate
\eqref{Eqn:Intro.FPU1} as
\begin{align}%
\label{e:FPU}
\dot{r}_j = v_{j+1}-v_j\;,\qquad \dot{v}_j =
\Phi^{\prime}\at{r_j} - \Phi^{\prime}\at{r_{j-1}}.
\end{align}
A travelling wave is a special solution to \eqref{e:FPU} that satisfies the ansatz
\begin{align}
\label{e:TW.Ansatz}
r_j\at\MiTime=R\at{j-\si{t}},\qquad
v_j\at\MiTime=V\at{j-\si{t}}.
\end{align}
$R$ and $V$ are the \emph{profile functions} for distances and velocities, $\si$ denotes the
wave speed, and $\phase=j-\si{t}$ is the \emph{phase variable}. In dependence of the
properties of $R$ and $V$ travelling waves come in different types. \emph{Wave trains} have
periodic profiles and are investigated in \cite{FV99,PP00,DHM06}. They describe oscillatory
solutions to \eqref{Eqn:Intro.FPU1} and provide the building blocks for Whitham's modulation
theory. Another important class of travelling waves are \emph{solitons} (or \emph{solitary
waves}), where $R$ and $V$ are localized over a constant background state. The existence of
solitons in lattices is a nontrivial problem and has been studied intensively during the last
20 years. We refer to \cite{FW94,SW97,FM02,Pan05,SZ07,Her10a} for variational methods, and to
\cite{Ioo00,IJ05} for an approach via spatial dynamics and centre manifold reduction.
\bigpar
In this paper we study \emph{fronts} which have heteroclinic shape and satisfy
\begin{align}%
\label{e:AsymptoticStates} %
\lim\limits_{\phase\to\pm\infty}R\at\phase=r_\pm
,\qquad%
\lim\limits_{\phase\to\pm\infty}V\at\phase=v_\pm
\end{align}%
with $\pair{r_-}{v_-}\neq\pair{r_+}{v_+}$. Fronts have attracted much less interest than
solitons, maybe because they only exist if the asymptotic states satisfy some very
restrictive conditions. In particular, $\Phi^\prime$ must have at least one turning point
between $r_-$ and $r_+$, and this excludes for instance the famous Toda potential.
Nonetheless, fronts in FPU chains appear naturally in atomistic Riemann problems, see
\cite{HR10a} for numerical simulations, and are important in the context of phase
transitions.
\par
The first rigorous result about fronts we are aware of is the bifurcation criterion from
\cite{Ioo00}. It implies that fronts with small jumps between the asymptotic states exist
only if $\Phi^\prime$ has a convex-concave turning point. Recently, the existence of fronts
was proven by variational methods in \cite{HR10b}. The existence theorem therein does not
require the asymptotic states to be close to each other but is restricted to convex
potentials $\Phi$. The proof relies on a Lagrangian action integral for fronts with
prescribed asymptotic states and uses the direct approach to establish the existence of
minimizers. A similar approach is used in \cite{KZ09a,KZ09b} to prove the existence of
fronts for sine-Gordon chains.
\par
In this paper we generalize the method from \cite{HR10b} and prove the existence of fronts
without convexity assumption on $\Phi$. Our main result can be summarized as follows.
\begin{theorem}
\label{Intro:MainTheo}
Action minimizing front solutions to \eqref{e:FPU} exist under the following hypotheses:
\begin{enumerate}
\item[$\at{i}$]
The asymptotic states and the front speed satisfy the \emph{macroscopic constraints},
which take the form of three independent jump conditions.
\item[$\at{ii}$]
The potential satisfies the \emph{graph condition} with respect to the asymptotic states.
\item[$\at{iii}$]
Some technical assumptions are also satisfied.
\end{enumerate}
Moreover, there is no front without $\at{i}$,
and no action minimizing front without $\at{ii}$.
\end{theorem}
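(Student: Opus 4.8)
The plan is to recast the travelling-wave problem as a variational one and to attack it by the direct method, arranging matters so that the three ingredients $\at{i}$--$\at{iii}$ enter at cleanly separated stages; the necessity statements then fall out of the same structure. First I would insert the ansatz \eqref{e:TW.Ansatz} into \eqref{e:FPU} to obtain the first-order profile equations $-\si R'=\nablap V$ and $-\si V'=\nablam\Phi'\at R$, where $\nablap,\nablam$ are the forward and backward unit shifts; eliminating $V$ gives the second-order advance--delay equation $\si^2 R''=\nablap\nablam\,\Phi'\at R$. Inserting the same ansatz into the Lagrangian $\sum_j\bat{\tfrac12\dot x_j^2-\Phi\at{x_{j+1}-x_j}}$ yields the action
\begin{align*}
\calA\ato{u}=\Rint\Bat{\tfrac12\,\si^2\,\at{u'\at\phase}^2-\Phi\at{\nablap u\at\phase}}\dphi ,
\end{align*}
with displacement profile $u$ and distance profile $R=\nablap u$; its Euler--Lagrange equation $\si^2 u''=\nablam\Phi'\at{\nablap u}$ is exactly the profile equation above.

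Next I would read off the macroscopic constraints as necessary conditions. Integrating the two first-order profile equations over $\Rset$ and telescoping the shift terms against the asymptotic states \eqref{e:AsymptoticStates} gives the momentum balance $\si\at{r_+-r_-}+\at{v_+-v_-}=0$ and the force balance $\si\at{v_+-v_-}+\at{\Phi'\at{r_+}-\Phi'\at{r_-}}=0$; a third, energetic identity (the lattice analogue of the Rankine--Hugoniot energy balance, obtained from the conserved energy density) completes the three independent jump conditions. Since every front must satisfy these integrated identities, this already proves \emph{``there is no front without $\at{i}$''}.

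For existence I would run the direct method. Because $u$ grows linearly with the two distinct slopes $r_\pm$, I split off a fixed background, $u=u_\bgr+\tilde u$, and minimize over $\tilde u$ decaying at $\pm\infty$; the potential term then reorganizes into the reduced potential $\aPot$, and the \emph{graph condition} $\at{ii}$ is precisely the requirement that $\aPot\geq 0$, with equality only at the asymptotic distances. This makes $\calA$ bounded below and coercive on the energy space, so a minimizing sequence is bounded, a weakly convergent subsequence can be extracted, and its limit is a minimizer; elliptic-type bootstrapping for the advance--delay operator then upgrades regularity and pins down the limits \eqref{e:AsymptoticStates}, producing a genuine front. Conversely, if $\at{ii}$ fails then $\aPot$ is negative somewhere in the relevant range, and inserting ever longer plateaus at such values drives $\calA$ below any bound without attaining it --- this is \emph{``no action minimizing front without $\at{ii}$''}. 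The technical assumptions $\at{iii}$ secure the quantitative coercivity constant, the nondegeneracy keeping the minimizer away from the trivial background, and the growth bounds needed for the bootstrap.

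The hard part will be the last stage, where non-convexity and non-compactness interact. Since $\Phi$ need not be convex, the potential term is not weakly lower semicontinuous by convexity, and minimizers may be genuinely non-monotone, so the convexity arguments of the convex case \cite{HR10b} are unavailable; the nonnegativity of $\aPot$ forced by the graph condition, rather than convexity, must carry the lower semicontinuity. At the same time the domain $\Rset$ is unbounded and $\calA$ is translation invariant, so a concentration--compactness analysis is required to rule out loss of mass at infinity or splitting into separated profiles --- here the heteroclinic boundary data and the strict positivity of $\aPot$ away from $\{r_-,r_+\}$ are exactly what confine the minimizing sequence. Reconciling these two difficulties is the obstacle that $\at{ii}$ and $\at{iii}$ are tailored to resolve.
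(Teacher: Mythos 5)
Your overall architecture coincides with the paper's: your displacement action is, after the normalization of Lemma \ref{Lem:Normalisation} and a Fourier transform, exactly the paper's splitting $\calL=\calN+\calP$ with $\Psi\at{r}=\tfrac12 r^2-\Phi\at{r}$, your derivation of the jump conditions is Lemma \ref{Lem:JumpCond}, and your plateau construction for the necessity of $\at{ii}$ is Remark \ref{Rem:UnboundedL}. The genuine gap is your central claim that the graph condition makes the action \emph{coercive}, ``so a minimizing sequence is bounded, a weakly convergent subsequence can be extracted, and its limit is a minimizer.'' This is false, and the paper says so explicitly: $\Psi$ vanishes at \emph{both} asymptotic values $\pm1$, so profiles with extending plateaus at $-1$ or $+1$, and with several transitions $-1\to+1\to-1\to\cdots\to+1$ drifting apart, have uniformly bounded action while $\norm{W_n-W_\sh\at{\cdot+\phase_n}}_2\to\infty$ for \emph{every} choice of shifts $\phase_n$ (see Figure \ref{fig:sep_phase}). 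Moreover, the quadratic part has Fourier symbol $1-\varrho\at{k}^2$ (in your variables $k^2-4\sin^2\at{k/2}$), which vanishes at $k=0$, so bounded action does not even control the norm of the energy space. Your later appeal to concentration--compactness acknowledges the difficulty but supplies no mechanism, and the heuristic you offer --- that strict positivity of $\Psi$ away from $r_\pm$ ``confines'' the minimizing sequence --- is not correct: positivity penalizes transition layers and interior plateaus, but it cannot penalize plateaus sitting at the zeros $\pm1$ themselves, which is precisely where compactness is lost. The paper's substitute is the separation-of-phases machinery: each transition layer costs at least $\bar\mu>0$ (Remark \ref{Rem:MinCostOfZero}), so bounded $\calP$ yields finitely many layers (Lemma \ref{Lem:ExSepPhases}); along a subsequence these layers have fixed shape and mutually diverging positions (Lemma \ref{Lem:SepPhasesForSequ}); shift invariance pins one $-\to+$ layer at the origin; the key bound is then $\norm{W_n-S_n}_2\leq C$ with $S_n=\sgn\at{\calA W_n}$ --- a bound relative to a sign pattern with finitely many jumps, not relative to $W_\sh$ --- and lower semicontinuity is recovered through the truncation operators $E_K$ together with Fatou's lemma (Theorem \ref{Theo:Minimiser}). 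None of this is replaceable by the standard bounded-sequence/weak-limit routine you describe.

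A second, smaller gap concerns the Euler--Lagrange step. The compactness, Fatou, and layer-cost arguments all require uniform $\fspaceL^\infty$ and Lipschitz bounds on the profiles, which forces one to minimize over the constrained convex set $\calC$ rather than over all decaying perturbations; a minimizer over a constrained set a priori satisfies only a variational inequality, not the front equation. The paper closes this with Lemma \ref{Lem:FrontsAndMinimisers}: conditions \cond{M}/\cond{I} provide an invariant interval for $\Phi^\prime$, hence $\calC$ is invariant under the explicit Euler scheme \eqref{Eqn:EulerScheme} of the $\fspaceL^2$-gradient flow, and only for this reason does a minimizer in $\calC$ solve \eqref{Norm.Problem.Eqn}. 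Your proposal never addresses this, and the ``elliptic-type bootstrap'' you invoke would itself need these a priori bounds to get started.
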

The assumptions in Theorem \ref{Intro:MainTheo} will be specified below.  The macroscopic
constraints, see Lemma \ref{Lem:JumpCond}, are algebraic relations and link fronts to energy
conserving shocks of the p-system, which is the na\"{\i}ve continuum limit of FPU chains. In
particular, they determine the wave speed $\si$ and imply that the asymptotic strains $r_-$
and $r_+$ cannot be chosen independently of each other. The graph condition reformulates the area condition
from \cite{HR10b} and requires that the graph of $\Phi$ is below the shock parabola
associated with the asymptotic states. Both the macroscopic constraints and the graph
condition appear naturally in our variational existence proof and guarantee that the action
integral is well-defined and bounded from below.
\bigpar
Closely related to fronts are heteroclinic waves with oscillatory tails. These are travelling
wave solutions to \eqref{e:FPU} which approach two different periodic waves for
$\phase\to\pm\infty$. Such oscillatory fronts are used to describe martensitic phase
transitions and to derive kinetic relations in solids \cite{BCS01a,BCS01b,AP07,Vai10}. The
only available existence results, however, concern piecewise quadratic potentials, which
allow for simplifying the travelling wave equation by means of Fourier transform, see
\cite{TV05,SCC05,SZ09}. It remains a challenging problem for future research to give
alternative, maybe variational, existence proofs that cover more general chains.
\bigpar
The paper is organized as follows. In \S\ref{sec:waves} we discuss the macroscopic
constraints and normalize the asymptotic states. Moreover, we reformulate the front equation
as an eigenvalue problem for a nonlinear integral operator. In \S\ref{sec:proof} we set the
existence problem into a variational framework and characterize fronts as minimizers of an
action integral. Or main technical result is Theorem \ref{Theo:Minimiser} and guarantees that
this action integral attains its minimum on a suitable set of candidates for fronts. The
proof uses \emph{separations of phases}, which are introduced in \S\ref{sec:proof:sop} and
allow to extract convergent subsequences from action minimizing sequences. Finally, we
present some numerical simulations in \S\ref{sec:num}.
%
\section{Preliminaries about fronts}\label{sec:waves}
%
%
Substituting the travelling wave ansatz \eqref{e:TW.Ansatz} into \eqref{e:FPU} yields
\begin{align}%
\label{e:tw} %
\si\tfrac{\dint}{\dint\phase}R(\phase)+V(\phase+1)-V(\phase)=0,\qquad
\si\tfrac{\dint}{\dint\phase}V(\phase)+ \Phi^\prime\bat{R\at{\phase}}-
\Phi^\prime\bat{R\at{\phase-1}}=0,
\end{align}
which is a nonlinear system of advance-delay-differential equations. Moreover, combining both
equations we readily verify the energy law
\begin{align}%
\label{e:tw.energy} %
\si\tfrac{\dint}{\dint\phase}\Bat{\tfrac{1}{2}V^2\at\phase+\Phi\at{R\at\phase}}+
\Phi^\prime\bat{R\at\phase}V\at{\phase+1}-
\Phi^\prime\bat{R\at{\phase-1}}V\at{\phase}=0.
\end{align}
%
\subsection{Macroscopic constraints for the asymptotic states}%
%
We now derive the macroscopic constraints that couple the front speed $\si$ to the asymptotic
states $\pair{r_\pm}{v_\pm}$ from \eqref{e:AsymptoticStates}. To this end
we consider continuous observables
$\psi=\psi\pair{r}{v}$ and
denote by
\begin{align*}
\jump{\psi\pair{r}{v}}:=\psi\pair{r_+}{v_+}-\psi\pair{r_-}{v_-}
\qquad\text{and}\qquad
\mean{\psi\pair{r}{v}}:=\tfrac{1}{2}\at{\psi\pair{r_-}{v_-}+\psi\pair{r_+}{v_+}},
\end{align*}
the \emph{jump} and \emph{mean value}, respectively.
\par
The following result was proven in \cite{HR10b} (see also \cite{AP07}) by integrating
\eqref{e:tw} and \eqref{e:tw.energy} over a finite interval $\ccinterval{-N}{N}$ and passing
to the limit $N\to\infty$.  
\begin{lemma}%
\label{Lem:JumpCond}
The asymptotic states of each front satisfy
\begin{align}
\label{Lem:JumpCond.Eqn1}
\si\jump{r}+\jump{v}=0,\qquad
\si\jump{v}+\jump{\Phi^\prime\at{r}}=0,\qquad
\si\jump{\tfrac{1}{2}v^2+\Phi\at{r}}+\jump{\Phi^\prime\at{r}v}=0.
\end{align}
\end{lemma}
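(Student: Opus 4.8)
The plan is to integrate each of the three conservation-type identities over the symmetric interval $\ccinterval{-N}{N}$ and then send $N\to\infty$, using the asymptotic conditions \eqref{e:AsymptoticStates}. Every term in \eqref{e:tw} and \eqref{e:tw.energy} is either an exact $\phase$-derivative, which integrates to a difference of boundary values, or a finite shift such as $V\at{\phase+1}-V\at\phase$ or $\Phi^\prime\bat{R\at\phase}-\Phi^\prime\bat{R\at{\phase-1}}$, whose integral telescopes into contributions supported on unit-length intervals adjacent to the endpoints $\pm N$. Since $R$ and $V$ are smooth travelling-wave profiles, all finite-$N$ computations are unproblematic, and the whole content of the lemma lies in the limit.

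I would first treat the first equation in \eqref{e:tw}. Integration and the substitution $\phase\mapsto\phase+1$ in the shift term, which cancels the overlapping bulk, give
\begin{align*}
\si\bat{R\at{N}-R\at{-N}}+\int\limits_{N}^{N+1}V\at\phase\dint\phase-\int\limits_{-N}^{-N+1}V\at\phase\dint\phase=0.
\end{align*}
As $N\to\infty$ the boundary term tends to $\si\jump{r}$, while the two unit-interval integrals tend to $v_+$ and $v_-$ respectively, yielding the first identity $\si\jump{r}+\jump{v}=0$. The second equation of \eqref{e:tw} is handled identically: the $\si V^\prime$ term contributes $\si\bat{V\at{N}-V\at{-N}}\to\si\jump{v}$, and the shift in $\Phi^\prime\at{R}$ telescopes, after $\phase\mapsto\phase-1$, to $\int_{N-1}^N\Phi^\prime\bat{R\at\phase}\dint\phase-\int_{-N-1}^{-N}\Phi^\prime\bat{R\at\phase}\dint\phase\to\Phi^\prime\at{r_+}-\Phi^\prime\at{r_-}=\jump{\Phi^\prime\at{r}}$, giving the second identity.

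For the energy law \eqref{e:tw.energy} the derivative term produces $\si\jump{\tfrac12 v^2+\Phi\at{r}}$, and the remaining bilinear shift, after the same substitution $\phase\mapsto\phase-1$ in the second summand, collapses to $\int_{N-1}^N\Phi^\prime\bat{R\at\phase}V\at{\phase+1}\dint\phase-\int_{-N-1}^{-N}\Phi^\prime\bat{R\at\phase}V\at{\phase+1}\dint\phase$. On the right interval $R\at\phase\to r_+$ and $V\at{\phase+1}\to v_+$, and on the left interval $R\at\phase\to r_-$, $V\at{\phase+1}\to v_-$, so this converges to $\Phi^\prime\at{r_+}v_+-\Phi^\prime\at{r_-}v_-=\jump{\Phi^\prime\at{r}v}$, producing the third identity.

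The only nontrivial point is the justification of these limits, i.e. that averaging a profile over a unit interval sliding off to $\pm\infty$ returns its asymptotic value, e.g. $\int_N^{N+1}V\at\phase\dint\phase\to v_+$. This is where I expect the main obstacle to sit. It holds provided the convergence in \eqref{e:AsymptoticStates} is strong enough: continuity of $V$ together with the existence of the limit $v_+$ already forces $\sup_{\phase\in\ccinterval{N}{N+1}}\abs{V\at\phase-v_+}\to0$, which controls the linear terms, and boundedness of $\Phi$ and $\Phi^\prime$ along the bounded profiles then carries the argument through for the nonlinear and bilinear contributions. I would therefore state these continuity and boundedness hypotheses explicitly as part of the technical assumptions and verify that under them every boundary and unit-interval contribution passes to the limit as claimed.
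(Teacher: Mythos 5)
Your proposal is correct and takes essentially the same approach as the paper: the paper gives no detailed proof of Lemma \ref{Lem:JumpCond}, stating only that the result was proven in \cite{HR10b} "by integrating \eqref{e:tw} and \eqref{e:tw.energy} over a finite interval $\ccinterval{-N}{N}$ and passing to the limit $N\to\infty$", which is precisely your telescoping-plus-limit argument. The details you add (the unit-interval boundary integrals and the justification that sliding unit-interval averages converge to the asymptotic states) are a correct fleshing-out of that sketch.
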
%
Heuristically, Lemma \ref{Lem:JumpCond} reflects that fronts transform into shock waves when 
passing to large spatial and temporal scales. The jump conditions \eqref{Lem:JumpCond.Eqn1} 
precisely mean that the asymptotic states correspond to an \emph{energy conserving shock} for 
the p-system and imply that each front satisfies mass, momentum, and energy. The p-system is the 
na\"{\i}ve continuum limit of FPU chains under the hyperbolic scaling and reads
\begin{align}
\label{Eqn:PSystem}
\partial_\tau{r}=\partial_y{v},\qquad
\partial_\tau{v}=\partial_y{\Phi^\prime\at{r}},
\end{align}
where $\tau=\eps{t}$ and $y=\eps{j}$ denote the macroscopic time and space, respectively, and
$\eps>0$ is a small scaling parameter. The conservation laws in \eqref{Eqn:PSystem}
correspond to mass and momentum, and imply the conservation of energy for smooth solutions,
that is
\begin{align}
\label{Eqn:PSystem.E}
\partial_\tau\at{\tfrac{1}{2}v^2+\Phi\at{r}}=\partial_\tau\at{v\Phi^\prime\at{r}}.
\end{align}%
The jump conditions for \eqref{Eqn:PSystem.E}, however, is independent of the jump conditions
for \eqref{Eqn:PSystem}. More details about the p-system and energy conserving shocks can be
found in \cite{HR10a,HR10b}.
\par
Using the discrete Leibniz rule %
\begin{math} %
\jump{\psi_1 \psi_2}=\jump{\psi_1}\mean{\psi_2}+\mean{\psi_1}\jump{\psi_2}
\end{math} %
we readily verify that \eqref{Lem:JumpCond.Eqn1} implies
\begin{align}
\label{Lem:JumpCond.Eqn2}
\jump{\Phi\at{r}}=\jump{r}\mean{\Phi^\prime\at{r}}
,\qquad%
\si^2=\jump{\Phi^\prime\at{r}}/\jump{r}.
\end{align}
Conversely, for any $\pair{r_-}{r_+}$ with \eqref{Lem:JumpCond.Eqn2}$_1$ there exist -- up to
Galilean transformations -- exactly two solutions to \eqref{Lem:JumpCond.Eqn1} which differ
in $\sgn{\si}$. We now characterize the geometric meaning of \eqref{Lem:JumpCond.Eqn2} and
refer to Figure \ref{fig:shocks} for an illustration.
\begin{figure}[ht!]
\centering{%
\includegraphics[width=0.975\textwidth, draft=\figdraft]%
{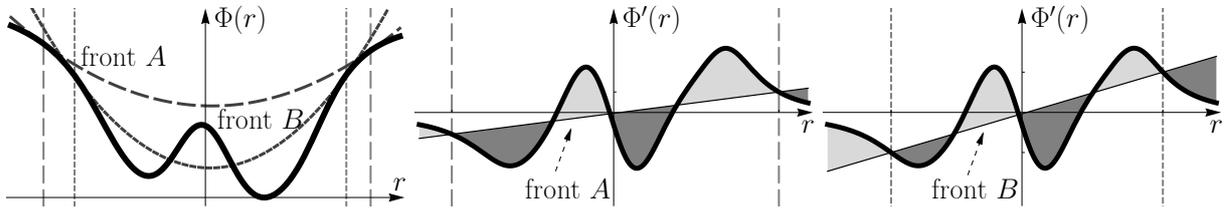}%
\caption{%
To each front there exists a parabola that touches the graph of $\Phi$ in both $r_-$ and
$r_+$. Consequently, the signed area between the graph of $\Phi^\prime$ and the secant
connecting $r_-$ and $r_+$ vanishes the stripe $\ccinterval{r_-}{r_+}$.
}%
\label{fig:shocks}
}%
\end{figure}
\begin{lemma}
The following conditions are equivalent:
\begin{enumerate}
\item[$\at{i}$]
$\triple{\si}{r_-}{r_+}$ fulfils \eqref{Lem:JumpCond.Eqn2},
\item[$\at{ii}$]
there exists a parabola that touches the graph of $\Phi$ in both
$r_-$ and $r_+$,
\item[$\at{iii}$]
the signed area between the graph of $\Phi^\prime$ and the secant connecting $r_-$ to
$r_+$ sums up to zero in $\ccinterval{r_-}{r_+}$.
\end{enumerate}
Moreover, each condition implies that $\Phi^\prime$ has at least one turning point between
$r_-$ and $r_+$.
\end{lemma}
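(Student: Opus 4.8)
The plan is to show that each of the three conditions is equivalent to the single scalar identity $\jump{\Phi\at{r}}=\jump{r}\,\mean{\Phi'\at{r}}$, that is, to the first equation in \eqref{Lem:JumpCond.Eqn2}, while the speed law $\si^2=\jump{\Phi'\at{r}}/\jump{r}$ merely fixes the leading coefficient of the parabola in $\at{ii}$ and the slope of the secant in $\at{iii}$. The whole computation rests on two elementary facts: on the one hand $\int_{r_-}^{r_+}\Phi'\at{r}\dint{r}=\jump{\Phi\at{r}}$ by the fundamental theorem of calculus, and on the other hand $\int_{r_-}^{r_+}\ell\at{r}\dint{r}=\jump{r}\,\mean{\Phi'\at{r}}$ for \emph{any} affine function $\ell$ with $\ell\at{r_\pm}=\Phi'\at{r_\pm}$, since the trapezoidal rule is exact for affine integrands.

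First I would settle $\at{iii}\Leftrightarrow\at{i}$. Let $\ell$ be the secant of $\Phi'$ through the two endpoints. By the two facts above the signed area equals $\int_{r_-}^{r_+}\bat{\Phi'\at{r}-\ell\at{r}}\dint{r}=\jump{\Phi\at{r}}-\jump{r}\,\mean{\Phi'\at{r}}$, so it vanishes precisely when the first identity in \eqref{Lem:JumpCond.Eqn2} holds. Next I would treat $\at{ii}\Leftrightarrow\at{i}$. Writing $p\at{r}=a r^2+b r+c$, the requirement that $p$ touch the graph of $\Phi$ at $r_\pm$ means tangency, $p\at{r_\pm}=\Phi\at{r_\pm}$ and $p'\at{r_\pm}=\Phi'\at{r_\pm}$. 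The two slope conditions force $a=\jump{\Phi'\at{r}}/\at{2\jump{r}}$, equivalently $\si^2=2a$, and determine $b$; the value condition at $r_-$ then fixes $c$. Since $p'$ is affine and interpolates $\Phi'$ at the endpoints, one has $p\at{r_+}-p\at{r_-}=\jump{r}\,\mean{\Phi'\at{r}}$, so the one remaining value condition $p\at{r_+}=\Phi\at{r_+}$ holds if and only if $\jump{\Phi\at{r}}=\jump{r}\,\mean{\Phi'\at{r}}$. This yields both implications and identifies the leading coefficient as $a=\si^2/2$, so that $\at{i}$, $\at{ii}$ and $\at{iii}$ are all equivalent.

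For the final assertion I would argue from $\at{iii}$. Setting $h\deq\Phi'-\ell$ we have $h\at{r_\pm}=0$ and $\int_{r_-}^{r_+}h\dint{r}=0$. Apart from the degenerate case $\Phi'\equiv\ell$ on $\ccinterval{r_-}{r_+}$, in which $\Phi$ is quadratic and no turning point is to be expected, we have $h\not\equiv0$; and a continuous, one-signed integrand with vanishing mean is identically zero, so $h$ must change sign on $\oointerval{r_-}{r_+}$. Thus $\Phi'$ crosses its own secant and is therefore neither convex nor concave on $\ccinterval{r_-}{r_+}$, because a convex function stays below its secant and a concave one above, either of which would force $h$ to keep its sign. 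Consequently $\at{\Phi'}''=\Phi'''$ takes both signs and, being continuous, vanishes with a change of sign at some interior point; that is, $\Phi'$ has a turning point, i.e. a change of convexity, strictly between $r_-$ and $r_+$.

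The computations are routine bookkeeping with jumps and means; the only genuinely delicate point is the last paragraph, where one must fix the correct reading of \emph{turning point} — a change of convexity of $\Phi'$ rather than a local extremum, the distinction mattering already for $\Phi'\at{r}=r^3$ — and dispose of the one-signed and affine-degenerate cases. I expect this to be the main obstacle, with the equivalences $\at{i}\Leftrightarrow\at{ii}\Leftrightarrow\at{iii}$ being essentially immediate from the two integral identities.
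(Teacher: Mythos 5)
Your proposal is correct, and while its substance is close to the paper's proof, the bookkeeping is organized differently. The paper establishes $\at{i}\Leftrightarrow\at{ii}$ by writing the four touching conditions in jump/mean form and solving them explicitly with the identity $\tfrac{1}{2}\jump{r^2}=\mean{r}\jump{r}$, which yields the coefficients $a=\si^2$, $b=\mean{\Phi^\prime\at{r}}-\si^2\mean{r}$ and an explicit $c$; it then dismisses $\at{ii}\Leftrightarrow\at{iii}$ as ``immediate since the secant has slope $\si^2$.'' You instead funnel all three conditions through the single identity $\jump{\Phi\at{r}}=\jump{r}\mean{\Phi^\prime\at{r}}$, using the fundamental theorem of calculus together with exactness of the trapezoidal rule for affine integrands; your observation that the derivative $p^\prime$ of a touching parabola is exactly the secant (affine interpolant) of $\Phi^\prime$ is what makes the paper's terse $\at{ii}\Leftrightarrow\at{iii}$ step transparent, at the cost of not displaying the explicit parabola coefficients that the paper records. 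For the last assertion both arguments are the same secant argument---if $\Phi^\prime$ were convex or concave on $\ccinterval{r_-}{r_+}$ its graph would lie on one side of the secant, contradicting the vanishing signed area---but you are more careful on two points the paper glosses over: you isolate the degenerate case $\Phi^\prime\equiv\ell$, in which $\Phi$ is quadratic, conditions $\at{i}$--$\at{iii}$ hold, and yet no turning point exists (so the lemma's final claim, read literally, fails there), and you pin down ``turning point'' as a change of convexity of $\Phi^\prime$. One caveat: your detour through $\Phi^{\prime\prime\prime}$ tacitly assumes $C^3$ regularity that neither the lemma nor the paper requires; it is also unnecessary, since the sign change of $h=\Phi^\prime-\ell$ already shows $\Phi^\prime$ is neither convex nor concave on $\ccinterval{r_-}{r_+}$, which is precisely the negation of ``no turning point'' and finishes the proof without differentiating $\Phi^\prime$ at all.
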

\begin{proof}
Consider the parabola $f\at{r}=\tfrac{1}{2}ar^2+br+c$. The touching conditions
\begin{align*}
f\at{r_\pm}=\Phi\at{r_\pm},\quad
f^\prime\at{r_\pm}=\Phi^\prime\at{r_\pm}
\end{align*}
are equivalent to
\begin{align*}
\tfrac{1}{2}a\jump{r^2}+b\jump{r}=\jump{\Phi\at{r}},\quad
\tfrac{1}{2}a\mean{r^2}+b\mean{r}+c=\mean{\Phi\at{r}},\quad
\jump{r}a=\jump{\Phi^\prime\at{r}}
,\quad
a\mean{r}+b=\mean{\Phi^\prime\at{r}},
\end{align*}
and by $\tfrac{1}{2}\jump{r^2}=\mean{r}\jump{r}$ we conclude that $\at{i}$ and $\at{ii}$ are
equivalent via
\begin{align*}
a=\si^2
,\quad
b=\mean{\Phi^\prime\at{r}}-\si^2\mean{r}
,\quad
{c}=&
\mean{\Phi\at{r}}-\mean{r}\mean{\Phi^\prime\at{r}}+
\si^2\bat{\mean{r}^2-\tfrac{1}{2}\mean{r^2}}.
\end{align*}
The equivalence of $\at{ii}$ and $\at{iii}$ is immediate since the secant has slope $\si^2$,
and $\Phi^\prime$  must have a turning point because otherwise the graph of $\Phi^\prime$
would be either below or above the secant.
\end{proof}
Condition \eqref{Lem:JumpCond.Eqn2}$_1$ is the \emph{kinetic relation} for fronts and reveals
that the asymptotic states cannot be chosen arbitrarily. More precisely, for given $r_-$ and
$r_+$ we can choose $\si$ and $\jump{v}$ such that the first two jump conditions in
\eqref{Lem:JumpCond.Eqn1} (which correspond to mass and momentum) are satisfied. However, for
the energy condition \eqref{Lem:JumpCond.Eqn1}$_3$ to hold, $r_-$ and $r_+$ must additionally
fulfil \eqref{Lem:JumpCond.Eqn2}$_1$.  Form this we conclude that fronts do not exist if
$\Phi^\prime$ is either convex or concave, and that in general we cannot prescribe both $r_-$
and $r_+$.
\bigpar
We emphasize that \eqref{Lem:JumpCond.Eqn1} is in general not sufficient for the existence of
fronts, \ie, there exist energy conserving shocks in the p-system that can not be realized by
a front in FPU. In fact,  it was proven in \cite{Ioo00} that fronts bifurcate from
convex-concave but not from concave-convex turning points of $\Phi^\prime$. This disproves
the existence of \emph{subsonic} fronts with small jump heights although there exist the
corresponding energy conserving shocks.
\par
In order to prove the existence of action minimizing fronts we shall additionally to
\eqref{Lem:JumpCond.Eqn1} require that the graph of $\Phi$ is below the parabola defined by
the asymptotic states, see Assumption \ref{Ass:Pot}. In particular, our existence result
provides a front for Example $A$ from Figure \ref{fig:shocks} but does not cover Example $B$,
see Remark \ref{Rem:UnboundedL} and the examples in \S\ref{sec:num}.
%
%
%
\subsection{Normalization and reformulation}%
%
For our analysis in \S\ref{sec:proof} it is convenient to normalize the asymptotic states and
to reformulate the front equation \eqref{e:tw} as an eigenvalue problem for a nonlinear
integral operator.
\begin{lemma}
\label{Lem:Normalisation}
Up to affine transformations we can assume that
\begin{align}
\label{Norm.Problem.States}
\si=1
,\quad%
r_\pm=\pm1
,\quad%
v_\pm=\mp1
,\quad%
\Phi^\prime\at{\pm1}=\pm1
,\quad%
\Phi\at{\pm1}=\tfrac{1}{2}.
\end{align}
Moreover, with \eqref{Norm.Problem.States} the front equation is equivalent to
\begin{align}%
\label{Norm.Problem.Eqn}%
W=\calA{\Phi}^\prime\at{\calA{W}}
,\qquad%
\at{\calA{W}}\at\phase=
\int\limits_{\phase-\tfrac{1}{2}}^{\phase-\tfrac{1}{2}}W\at{\tilde\phase}\dint\tilde\phase,
\end{align}
where $W$ is a normalized profile with $\lim_{\phase\to\pm\infty}W\at\phase=\pm1$.
\end{lemma}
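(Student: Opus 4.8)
The plan is to handle the two claims in turn: realise the normalisation \eqref{Norm.Problem.States} through a family of affine symmetries of \eqref{e:FPU}, and then eliminate the velocity profile so that the advance--delay system \eqref{e:tw} collapses to the single fixed-point equation \eqref{Norm.Problem.Eqn}. For the first claim I would use that \eqref{e:FPU} is invariant under: rescaling of time by the factor $\si$ (which normalises the speed to $1$; since the two admissible speeds differ only in sign we may assume $\si>0$); the affine change of strain $r=a\tilde r+b$; the affine change of velocity $v=c\tilde v+d$; and the replacement of $\Phi(r)$ by $\tfrac{1}{a^2\si^2}\Phi(ar+b)+er+f$, where adding the linear term $er$ is harmless because only differences of $\Phi'$ enter \eqref{e:FPU}.

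Imposing $\tilde r_\pm=\pm1$ fixes $a=\tfrac12\jump{r}$ and $b=\mean{r}$, and imposing $\tilde v_\pm=\mp1$ fixes $c=-\tfrac12\jump{v}$ and $d=\mean{v}$. The first thing to verify is that these are compatible with the relation $c=a\si$ that the first equation of \eqref{e:tw} forces under the time rescaling; this compatibility is precisely $\si\jump{r}+\jump{v}=0$, \ie\ the first macroscopic constraint. I would then read off $e$ and $f$ from $\Phi'(\pm1)=\pm1$ and $\Phi(\pm1)=\tfrac12$: the two \emph{difference} requirements collapse exactly to $\si^2=\jump{\Phi'(r)}/\jump{r}$ and $\jump{\Phi(r)}=\jump{r}\mean{\Phi'(r)}$, that is to \eqref{Lem:JumpCond.Eqn2}, while the two \emph{mean} requirements merely determine the still-free constants $e$ and $f$. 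Hence the normalisation is attainable exactly because the asymptotic data satisfy the macroscopic constraints, and this part is bookkeeping once that compatibility is seen.

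For the reformulation I set $\si=1$ and eliminate $V$: differentiating the first equation of \eqref{e:tw} and inserting the second gives
\begin{align*}
R''\at\phase=\Phi'\bat{R\at{\phase+1}}-2\,\Phi'\bat{R\at\phase}+\Phi'\bat{R\at{\phase-1}},
\end{align*}
the discrete Laplacian of $\Phi'(R)$. The decisive observation is the operator identity $(\calA^2 g)''\at\phase=g\at{\phase+1}-2\,g\at\phase+g\at{\phase-1}$, obtained by applying twice the elementary rule $(\calA g)'\at\phase=g\at{\phase+\tfrac12}-g\at{\phase-\tfrac12}$. Thus $\bat{R-\calA^2\Phi'(R)}''=0$, so $R-\calA^2\Phi'(R)$ is affine in $\phase$; boundedness of the front profile (together with the fact that $\calA$ maps functions with limits at $\pm\infty$ to functions with the same limits) kills the slope, and evaluating the limits $R\to\pm1$, $\Phi'(R)\to\pm1$ kills the constant, leaving $R=\calA^2\Phi'(R)$.

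To reach \eqref{Norm.Problem.Eqn} I would set $W\deq\calA\Phi'(R)$, so that $\calA W=R$ and therefore $W=\calA\Phi'(\calA W)$, with $W\to\pm1$ since $\Phi'(R)\to\pm1$. For the converse I would take a solution $W$ of \eqref{Norm.Problem.Eqn} with $W\to\pm1$, define $R\deq\calA W$ and $V\at\phase\deq-W\at{\phase-\tfrac12}$, and check directly, using $W=\calA\Phi'(R)$ and the same first-order rule for $\calA$, that both equations of \eqref{e:tw} hold and that the asymptotic states are the normalised ones. The one genuinely delicate step, which I expect to be the main obstacle, is the justification that the integration constants vanish when passing from the second-order equation to $R=\calA^2\Phi'(R)$: this rests on the a priori knowledge that a front profile is bounded with well-defined one-sided limits, after which the equivalence of the eigenvalue formulation with \eqref{e:tw} is routine.
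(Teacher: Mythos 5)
Your proposal is correct, and while the normalization half matches the paper, your reformulation half takes a genuinely different route. For the normalization the two arguments are the same bookkeeping in different clothes: where you present the affine symmetry group of \eqref{e:FPU} and count parameters, the paper simply writes down normalized profiles via $R\at\phase=\mean{r}+\tfrac{1}{2}\jump{r}U\at{\phase+1/2}$, $V\at\phase=\mean{v}+\tfrac{1}{2}\jump{v}W\at\phase$ together with an explicit normalized potential $\wh{\Phi}$, and then checks that $\wh{\Phi}^\prime\at{\pm1}=\pm1$ holds automatically while $\wh{\Phi}\at{\pm1}=\tfrac{1}{2}$ is equivalent to the energy jump condition \eqref{Lem:JumpCond.Eqn1}$_3$ --- precisely your observation that the two difference requirements collapse to \eqref{Lem:JumpCond.Eqn2} and the two mean requirements only fix the free linear part of the potential. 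The real divergence is in deriving \eqref{Norm.Problem.Eqn}: the paper never forms a second-order equation. In the normalized variables each right-hand side of \eqref{e:tw} is already an exact $\phase$-derivative, since $W\at{\phase+\tfrac{1}{2}}-W\at{\phase-\tfrac{1}{2}}=\at{\calA{W}}^\prime\at\phase$, so a \emph{single} integration of each equation gives $U=\calA{W}+c_1$ and $W=\calA\wh{\Phi}^\prime\at{U}+c_2$, with $c_1=c_2=0$ forced by the asymptotic states; combining the two yields the fixed-point equation at once. You instead eliminate $V$, pass to the discrete-Laplacian equation for $R$, invoke the identity $\at{\calA^2 g}^{\prime\prime}\at\phase=g\at{\phase+1}-2g\at\phase+g\at{\phase-1}$, and must then kill an affine function $\alpha\phase+\beta$ using boundedness of the profile plus the limits --- a double integration where the paper needs a single one, the extra boundedness argument being the price (and requiring slightly more smoothness, which the equation does supply). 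What your route buys is that the velocity profile disappears from the outset, the $\calA^2$ structure behind \eqref{Norm.Problem.Eqn} is made explicit, and your converse step --- reconstructing $V\at\phase=-W\at{\phase-\tfrac{1}{2}}$ and verifying both equations of \eqref{e:tw} --- is more explicit than the paper, which leaves the back-substitution implicit. Both arguments rest on the same two pillars, namely that the macroscopic constraints make the normalization consistent and that $\calA$ preserves asymptotic limits so that integration constants vanish, so your proof is sound; the paper's is merely shorter.
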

\begin{proof}
Let $U$ and $W$ be two normalized profiles such that
\begin{align*}
{R}\at\phase=\mean{r}+\tfrac{1}{2}\jump{r}U\at{\phase+1/2}
,\qquad%
V\at\phase=\mean{v}+\tfrac{1}{2}\jump{v}W\at\phase.
\end{align*}
Using the first two jump conditions from \eqref{Lem:JumpCond.Eqn1} we readily verify that
\eqref{e:tw} transforms into
\begin{align}%
\label{Lem:Normalisation.Eqn1}%
\tfrac{\dint}{\dint\phase}U\at\phase=W\at{\phase+1/2}-W\at{\phase-1/2}
,\qquad%
\tfrac{\dint}{\dint\phase}W\at\phase=\wh{\Phi}^\prime\bat{U\at{\phase+1/2}}-
\wh{\Phi}^\prime\bat{U\at{\phase-1/2}},
\end{align}
where the normalized potential
\begin{align*}
\wh{\Phi}\at{u}=\frac{4}{\jump{\Phi^\prime\at{r}}\jump{r}}
{\Phi}\Bat{\mean{r}+\tfrac{1}{2}\jump{r}\,u}
-\frac{2\mean{\Phi^\prime\at{r}}}{\jump{\Phi^\prime\at{r}}}\,{u}+\frac{1}{2}-
\frac{4\mean{\Phi\at{r}}}{\jump{\Phi^\prime\at{r}}\jump{r}}
\end{align*}
satisfies $\wh{\Phi}^\prime\at{\pm1}=\pm1$. Moreover, we have
$\wh{\Phi}\at{-1}=\wh{\Phi}\at{+1}=\tfrac{1}{2}$ if and only if the third jump condition
\eqref{Lem:JumpCond.Eqn1}$_3$ is satisfied. Towards \eqref{Norm.Problem.Eqn} now suppose
\eqref{Norm.Problem.States}. Integrating \eqref{Lem:Normalisation.Eqn1}$_1$ we find
$U=\calA{W}$, where the constant of integration vanishes due to $U\at{\pm1}=W\at{\pm1}=\pm1$,
and similarly we derive $W=\calA\wh{\Phi}^\prime\at{U}$ from
\eqref{Lem:Normalisation.Eqn1}$_2$.
\end{proof}
The front parabola for normalized data \eqref{Norm.Problem.States} is $r\mapsto\tfrac{1}{2}r^2$ 
and each solution to \eqref{Norm.Problem.Eqn} can be viewed as a perturbation of the \emph{shock
profile}
\begin{align}
\label{E:DefShock}%
W_\sh\at\phase=\sgn\phase=\left\{
\begin{array}{rcl}
+1&\text{for}&\phase<0,\\
0&\text{for}&\phase=0,\\
-1&\text{for}&\phase>0.
\end{array}
\right.
\end{align}
Notice that the residual of $W_\sh$, that is $W_\sh-\calA\Phi^\prime\at{\calA{W_\sh}}$, has
compact support.
\bigpar
We proceed with some preliminary remarks about the action of a front. Heuristically, the
action density in the normalized setting is given by
\begin{align*}
\tfrac{1}{2}W^2-\Phi\at{\calA{W}}
=\tfrac{1}{2}W^2-\tfrac{1}{2}\at{\calA{W}}^2+\Psi\at{\calA{W}}
\end{align*}
with
\begin{align}
\label{E:DefPsi}%
\Psi\at{r}=\tfrac{1}{2}r^2-\Phi\at{r},
\end{align}
so the action integral formally reads
\begin{align}
\label{E:DefAction2}%
\wt{\calL}\at{W}&= \int\limits_\Rset
\tfrac{1}{2}W^2-\tfrac{1}{2}\at{\calA{W}}^2+\Psi\at{\calA{W}} \dint\phase.
\end{align}
Notice that $\Psi$ is just the difference between the front parabola and $\Phi$, see Figure
\ref{fig:shocks}, and that $\wt{\calL}$ is well defined as long as $W$ approaches its
asymptotic states sufficiently fast. A further possibility for defining the action integral
was introduced in \cite{HR10b} for monotone $W$ and relies on the relative action integral
\begin{align*}
\wh{\calL}\at{W}&=
\int\limits_\Rset %
\Bat{\tfrac{1}{2}W^2-
\Phi\at{\calA{W}}}-
\Bat{
\tfrac{1}{2}W_\sh^2-
\Phi\at{\calA{W_\sh}}}
\dint\phase.
\end{align*}
Both approaches are linked by $\wh{\calL}\at{W}=\wt{\calL}\at{W}-\wt{\calL}\at{W_\sh}$ and
the symmetry of $\calA$, compare Lemma \ref{Lem:AProps}, formally implies
\begin{align*}
\partial\wh{\calL}\at{W}=\partial\wt{\calL}\at{W}=W-\calA\Phi^\prime\at{\calA{W}}.
\end{align*}
In \S\ref{sec:proof} we give a slightly different definition of $\wt\calL$, see
\eqref{Eqn:DefM} and \eqref{Eqn:DefL}, and establish the existence of minimizers.
%
%
\section{Existence of fronts}\label{sec:proof}
%
In this section we assume that the asymptotic states and the potential are normalized by
\eqref{Norm.Problem.States} and show that the fixed point equation \eqref{Norm.Problem.Eqn}
has a solution in some appropriate function space.
%
%
\subsection{Assumptions}%
\newcommand{\cond}[1]{(#1)}
We rely on the following standing assumptions on the function $\Psi$ from \eqref{E:DefPsi}.
Examples and counterexamples are given in \S\ref{sec:num}.
\begin{assumption}
\label{Ass:Pot}%
$\Psi$ is continuously differentiable and satisfies the following conditions:
\begin{enumerate}
\item[\cond{G}]
\emph{graph condition}: $\Psi\at{u}\geq\Psi\at{\pm1}=0$ for all $u\in\Rset$,
\item[\cond{X}]
\emph{genericity}:
$\Psi^{\prime\prime}\at{\pm1}>0$ and $\Psi\at{u}>0$ for all $u\neq\pm1$,
\item[\cond{M}]
\emph{monotone asymptotic behaviour}: $\Psi\at{u}$ is decreasing for $u\ll-1$ and
increasing for $u\gg1$.

\end{enumerate}%
\end{assumption}
Condition \cond{G} has a natural interpretation in terms of $\Phi$ and can easily be
reformulated for non-normalized data: It precisely means that the front parabola touches the
graph of $\Phi$ in both $r_-$ and $r_+$ but is above this graph in all other points.
Moreover, \cond{G} is equivalent to the \emph{area condition} from \cite{HR10b}, which
characterizes the signed area between the graph of $\Phi^\prime$ and the secant connecting
$r_-$ to $r_+$. With positive and negative sign for above and below the graph of
$\Phi^\prime$, respectively, the area condition reads as follows. The signed area is
non-negative in each stripe $\ccinterval{r_-}{r}$ with $r>r_-$ and non-positive in each
stripe $\ccinterval{r}{r_+}$ with $r<r_+$. We refer to Figure \ref{fig:shocks} for
illustration, where positive and negative area are displayed in dark and light grey colour,
respectively, and recall that the signed area vanishes in the stripe $\ccinterval{r_-}{r_+}$
due to \eqref{Lem:JumpCond.Eqn2}$_1$.
\par
We mention that \cond{G} is truly necessary for the existence of action minimizing fronts,
see Remark \ref{Rem:UnboundedL}. The conditions \cond{M} and \cond{X}, however, are made for
convenience and might be weakened for the price of more technical effort.
\begin{remark} %
\cond{X} is equivalent to
\begin{enumerate}
\item[\cond{S}]
\emph{supersonic front speed}: $\Phi^{\prime\prime}\at{\pm1}<1$,
\end{enumerate}
and \cond{M} implies
\begin{enumerate}
\item[\cond{I}]
\emph{invariant set for $\Phi^{\prime}$}: There exits a constant
$\Gamma>1$ such that $\Phi^\prime$ maps $\ccinterval{-\Gamma}{\Gamma}$ into itself.
\end{enumerate}
\end{remark}
\begin{proof}
\cond{S} follows from the definition of $\Psi$ in \eqref{E:DefPsi}. Towards \cond{I} we
exploit \cond{M} to choose $\tilde{\Gamma}>1$ such that $\Phi^\prime\at{u}>u$ for
$u<-\tilde{\Gamma}$ and $\Phi^\prime\at{u}<u$ for $u>\tilde{\Gamma}$. Then we set
\begin{math}
\Gamma=\max\{\tilde{\Gamma},\,\max_{\abs{u}
\leq%
{\tilde{\Gamma}}}\abs{\Phi^{\prime}\at{u}}\}
\end{math}.
\end{proof}
%
%
%
\subsection{Functionals and operators}%
%
We denote by $\fspaceL^p$, $\fspaceW^{1,\,p}$ and $\fspaceC^k$ the usual function spaces on
the real line, abbreviate the $\fspaceL^p$-norm by $\norm{\cdot}_p$, and write
\begin{align*}
\skp{W_1}{W_2}=\int_\Rset{W_1\at\phase}{W_2}\at\phase\dint\phase
\end{align*}
for the dual pairing of $W_1\in\fspaceL^p$ and $W_2\in\fspaceL^{p'}$ with $1=1/p+1/p'$.
\begin{lemma}
\label{Lem:AProps}
The averaging operator $\calA$ has the following properties:
\begin{enumerate}
\item %
$\calA$ maps $\fspaceL^p$ into $\fspaceL^\infty\cap\fspaceW^{1,\,p}\subset\fspaceC$ for
all $1\leq{p}\leq\infty$ with
\begin{align*}
\at{\calA{W}}^\prime\at\phase
=%
W\at{\phase+\tfrac{1}{2}}-W\at{\phase-\tfrac{1}{2}}
\end{align*} %
and
$\norm{\calA{W}}_{\fspaceL^p}\leq\norm{W}_{\fspaceL^p}$,
$\norm{\calA{W}}_{\fspaceL^\infty}\leq\norm{W}_{\fspaceL^p}$,
$\norm{\calA{W}}_{\fspaceW^{1,\,p}}\leq3\norm{W}_{\fspaceL^p}$.
\item
$\calA$ is symmetric in the sense that $\skp{\calA{W_1}}{W_2}=\skp{W_1}{\calA{W_2}}$
holds for all $W_1\in\fspaceL^p$ and $W_2\in\fspaceL^{p'}$.
\item %
$\calA$ is self-adjoint in $\fspaceL^2$ with spectrum
\begin{math}
\mathrm{spec}\calA=\{\varrho\at{k}\;:\;k\in\Rset\}
\end{math}
where $\varrho\at{k}=\tfrac{2}{k}\sin\at{\tfrac{k}{2}}$.
\end{enumerate}
\end{lemma}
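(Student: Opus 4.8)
The plan is to recognize the averaging operator as convolution with the even, compactly supported kernel $\chi=\mathbf{1}_{\ccinterval{-1/2}{1/2}}$, since $\at{\calA{W}}\at{\phase}=\int_{\phase-1/2}^{\phase+1/2}W\at{\tilde\phase}\dint\tilde\phase=\at{\chi*W}\at{\phase}$. With this observation each of the three assertions reduces to a standard fact about convolution: the mapping properties in item~(1) follow from Hölder's and Young's inequalities, the symmetry in item~(2) from Fubini's theorem together with the evenness of $\chi$, and the spectral statement in item~(3) from the fact that the Fourier transform diagonalizes convolution operators.

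For item~(1) I would first bound $\calA W$ pointwise: for $W\in\fspaceL^p$, Hölder's inequality gives $\abs{\at{\calA W}\at{\phase}}\leq\norm{W}_p\,\norm{\mathbf{1}_{\ccinterval{\phase-1/2}{\phase+1/2}}}_{p'}=\norm{W}_p$ because the window has unit length, which yields $\norm{\calA W}_{\fspaceL^\infty}\leq\norm{W}_p$. The estimate $\norm{\calA W}_{\fspaceL^p}\leq\norm{W}_p$ is then Young's inequality $\norm{\chi*W}_p\leq\norm{\chi}_1\norm{W}_p$ with $\norm{\chi}_1=1$. Next, writing $\calA W$ as the integral of an $\fspaceL^p$ function over a moving unit window, the fundamental theorem of calculus shows that $\calA W$ is absolutely continuous with a.e.\ derivative $\at{\calA W}^\prime\at{\phase}=W\at{\phase+1/2}-W\at{\phase-1/2}$; by translation invariance of the $\fspaceL^p$-norm this gives $\norm{\at{\calA W}^\prime}_p\leq2\norm{W}_p$ and hence $\norm{\calA W}_{\fspaceW^{1,p}}\leq3\norm{W}_p$. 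The one-dimensional Sobolev embedding $\fspaceW^{1,p}\subset\fspaceC$ then places $\calA W$ in $\fspaceL^\infty\cap\fspaceW^{1,p}\subset\fspaceC$.

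For item~(2) I would compute $\skp{\calA W_1}{W_2}$ as the double integral of $W_1\at{\tilde\phase}W_2\at{\phase}$ over the strip $\abs{\phase-\tilde\phase}\leq1/2$; the bound from item~(1), namely $\int\abs{\at{\calA\abs{W_1}}\at{\phase}}\,\abs{W_2\at{\phase}}\dint\phase\leq\norm{W_1}_p\norm{W_2}_{p'}$, guarantees absolute convergence, so Fubini applies and the manifest symmetry of the strip in $\phase$ and $\tilde\phase$ yields $\skp{\calA W_1}{W_2}=\skp{W_1}{\calA W_2}$. For item~(3) I would pass to the Fourier side: since $\calF$ is unitary on $\fspaceL^2$ and turns convolution into multiplication, $\calA$ is unitarily equivalent to multiplication by $\wh{\chi}\at{k}=\int_{-1/2}^{1/2}\mhexp{-\iu k x}\dint x=\tfrac{2}{k}\sin\at{\tfrac{k}{2}}=\varrho\at{k}$. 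As $\varrho$ is real-valued, $\calA$ is self-adjoint (which also follows at once from item~(2) and boundedness), and the spectrum of a multiplication operator equals the essential range of its symbol. Because $\varrho$ is continuous, even, satisfies $\varrho\at{0}=1$ and $\varrho\at{k}\to0$ as $\abs{k}\to\infty$, its range $\{\varrho\at{k}:k\in\Rset\}$ is closed and coincides with the essential range, giving $\mathrm{spec}\,\calA=\{\varrho\at{k}:k\in\Rset\}$.

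The inequalities in item~(1) and the Fubini argument in item~(2) are routine; the only point requiring genuine care is item~(3), where one must justify that the spectrum is exactly the range of $\varrho$ rather than merely its closure. This reduces to checking that $\varrho$ attains all its limiting values, so that no extra points enter the essential range: the value $0$ is attained at $k=2\pi$, and the removable singularity at $k=0$ carries the value $1$, whence an unbounded sequence $k_n$ forces $\varrho\at{k_n}\to0\in\mathrm{range}$ and a bounded one has a convergent subsequence. I expect this bookkeeping, together with the standard identification of the essential range for a continuous symbol, to be the main (though modest) obstacle.
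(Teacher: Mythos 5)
Your proposal is correct and follows essentially the same route as the paper, which simply declares items (1) and (2) straightforward and proves item (3) by noting that $\calA$ diagonalizes in Fourier space with symbol $\varrho\at{k}=\tfrac{2}{k}\sin\at{\tfrac{k}{2}}$ --- exactly your convolution-kernel computation $\wh{\chi}=\varrho$. Your extra care in checking that the range of $\varrho$ is closed (so that the spectrum is the range itself and not merely its closure) is a genuine detail the paper's one-line proof glosses over, and it is handled correctly.
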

\begin{proof}
The first two statements are straight forward. The third one follows since $\calA$
diagonalizes in Fourier space via
$\calA\mhexp{\iu{k}\phase}=\varrho\at{k}\mhexp{\iu{k}\phase}$.
\end{proof}
We now introduce the affine space
\begin{align*}
\calH=\left\{W\;:\;W-W_\sh\in\fspaceL^2\right\},
\end{align*}
where $W_\sh$ is the shock profile from $\eqref{E:DefShock}$. Exploiting Lemma
\ref{Lem:AProps}, the Taylor expansion of $\Phi^\prime$ around $\pm1$, and the properties of
$W_\sh$ we then find
\begin{align}
\label{Rem:HProps.Eqn4}
\calA{W},\;\Phi^\prime\at{\calA{W}},\;\calA\Phi^\prime\at{\calA{W}}\in\calH,\qquad
W-\calA{W},\;W-\calA^2{W}\in\fspaceL^2.
\end{align}
for all $W\in\calH$. In view of the action integral \eqref{E:DefAction2} we also define a
functional $\calM$ on $\fspaceL^2$ by
\begin{align*}
\calM\nat{V}=
\tfrac{1}{2}\int\limits_\Rset%
V^2-\nat{\calA{V}}^2
\dint\phase=
\tfrac{1}{2}\int\limits_\Rset%
\nat{V-\calA^2V}V
\dint\phase,
\end{align*}
and a functional $\calN$ on $\calH$ by
\begin{align}
\label{Eqn:DefM}
\calN\at{W}=\calM\at{W-W_\sh}+
\tfrac{1}{2}\int\limits_\Rset%
W_\sh^2-\at{\calA{W_\sh}}^2\dint\phase+
\int\limits_\Rset
\at{W-W_\sh}\at{W_\sh-\calA^2{W_\sh}}\dint\phase.
\end{align}
Notice that $\calN\at{W}$ is well defined on $\calH$ as both $W_\sh^2-\at{\calA{W_\sh}}^2$
and $W_\sh-\calA^2{W_\sh}$ have compact support. Moreover, if $W-W_\sh$ decays sufficiently
fast for $\phase\to\pm\infty$ (say $W-W_\sh\in\fspaceL^1$), then we have
\begin{align}
\label{Eqn:MFormula}
\calN\at{W}=
\tfrac{1}{2}\int\limits_\Rset%
W^2-\at{\calA{W}}^2\dint\phase=\tfrac{1}{2}\int\limits_\Rset%
\at{W-\calA^2W}W\dint\phase.
\end{align}
\begin{lemma}
\label{Lem:MProps}
The functional $\calM$ is non-negative and weakly lower semi-continuous on $\fspaceL^2$.
\end{lemma}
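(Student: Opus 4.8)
The plan is to exploit the Fourier representation of $\calA$ from Lemma~\ref{Lem:AProps}. Since $\calA$ acts as multiplication by $\varrho\at{k}=\tfrac{2}{k}\sin\at{\tfrac{k}{2}}$ in Fourier space, self-adjointness and Plancherel's theorem give
\begin{align*}
\calM\at{V}=\tfrac{1}{2}\skp{\at{1-\calA^2}V}{V}=\tfrac{1}{2}\int\limits_\Rset\at{1-\varrho\at{k}^2}\babs{\wh{V}\at{k}}^2\dint{k},
\end{align*}
where $\wh{V}$ denotes the Fourier transform of $V$ and the constants in front are irrelevant for the sign. The elementary estimate $\abs{\sin\at{k/2}}\leq\abs{k/2}$ yields $\abs{\varrho\at{k}}\leq1$ for all $k\in\Rset$, so the multiplier $m\at{k}\deq1-\varrho\at{k}^2$ is non-negative. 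Non-negativity of $\calM$ follows at once.

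For the weak lower semicontinuity I would factor the non-negative multiplier. Let $M$ be the bounded self-adjoint Fourier multiplier with symbol $\sqrt{m\at{k}}=\sqrt{1-\varrho\at{k}^2}$, obtained by functional calculus from the bounded self-adjoint non-negative operator $1-\calA^2$, so that $\calM\at{V}=\tfrac{1}{2}\norm{MV}_2^2$. As a bounded linear operator on $\fspaceL^2$, $M$ is continuous from the weak to the weak topology; hence $V_n\rightharpoonup V$ implies $MV_n\rightharpoonup MV$. Combining this with the weak lower semicontinuity of the $\fspaceL^2$-norm gives $\norm{MV}_2\leq\liminf_{n\to\infty}\norm{MV_n}_2$, which is precisely the desired semicontinuity of $\calM$.

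Alternatively, one may argue abstractly: since $m\geq0$ the operator $1-\calA^2$ is bounded, self-adjoint and non-negative, so $\calM$ is a non-negative — hence convex — quadratic form that is moreover strongly continuous. A convex, strongly continuous functional on a Hilbert space is weakly lower semicontinuous, which again yields the claim.

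The only genuinely delicate point is that $\calM$ is not a norm but the difference $\tfrac{1}{2}\bat{\norm{V}_2^2-\norm{\calA{V}}_2^2}$, so weak lower semicontinuity does not follow by inspection; a bounded linear map need not be weak--strong continuous, and the subtracted term could in principle destroy semicontinuity. The crucial input is the sign $1-\varrho^2\geq0$ coming from $\abs{\varrho}\leq1$, which is exactly what allows the subtracted term to be absorbed — either by writing $\calM$ as the squared norm of a bounded operator or by invoking convexity. Once this sign is secured the remainder is routine.
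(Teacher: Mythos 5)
Your proof is correct and follows essentially the same route as the paper: Plancherel plus the Fourier multiplier representation $\calM\at{V}=\tfrac{1}{2}\norm{\sqrt{1-\varrho^2}\,\wh{V}}_2^2$, the bound $\abs{\varrho}\leq1$ for non-negativity, and then weak-to-weak continuity of the (bounded) multiplier combined with weak lower semicontinuity of the $\fspaceL^2$-norm. Your explicit check of $\abs{\varrho\at{k}}\leq1$ and the alternative convexity argument are fine additions, but the core argument is the paper's.
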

\begin{proof}
Denoting the Fourier transform of $V$ by $\wh{V}$ we find
\begin{align}
\notag
\calM\at{V}
&=%
\int\limits_\Rset\nat{1-\varrho\at{k}^2}\wh{V}\at{k}^2\dint{k}
=%
\norm{\sqrt{1-\varrho^2}\,\wh{V}}_2
\end{align}
with $\varrho$ as in Lemma \ref{Lem:AProps}. This gives the desired result as
$V_n\rightharpoonup{V_\infty}$  implies $\wh{V}_n\rightharpoonup{\wh{V}_\infty}$ and hence
$\sqrt{1-\varrho^2}\,\wh{V}_n\rightharpoonup{\sqrt{1-\varrho^2}\,\wh{V}_\infty}$
\end{proof}
\begin{lemma}
\label{Lem:NProps}
The functional $\calN$ is G\^{a}teaux differentiable on $\calH$ with derivative
\begin{align}
\label{Lem:NProps.Eqn4}
\partial\calN\at{W}=W-\calA^2{W}\in\fspaceL^2.
\end{align}
Moreover, $\calN$ is invariant under shifts in $\phase$-direction, and satisfies
\begin{align}
\label{Lem:NProps.Eqn6}
\calN\at{W_2}=\calN\at{W_1}+\calM\at{W_2-W_1}+\skp{W_2-W_1}{W_1-\calA^2{W}_1}
\end{align}
for all $W_1,\,W_2\in\calH$.
\end{lemma}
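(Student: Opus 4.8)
The plan is to exploit that, up to additive constants, $\calN$ is an inhomogeneous quadratic functional, so that all three assertions follow from a single \emph{exact} second-order expansion. Writing $V=W-W_\sh\in\fspaceL^2$, the definition \eqref{Eqn:DefM} reads
\begin{align*}
\calN\at{W}=\calM\at{V}+c_\sh+\skp{V}{W_\sh-\calA^2{W_\sh}},
\end{align*}
where $c_\sh=\tfrac12\int_\Rset W_\sh^2-\at{\calA{W_\sh}}^2\dint\phase$ is finite because the integrand has compact support. Since $\calM$ is the quadratic form of the bounded symmetric bilinear form $B\pair{V_1}{V_2}=\tfrac12\int_\Rset V_1V_2-\at{\calA{V_1}}\at{\calA{V_2}}\dint\phase$, I would first record, using the self-adjointness of $\calA$ from Lemma \ref{Lem:AProps}, that $2B\pair{V}{H}=\skp{V-\calA^2{V}}{H}$ for all $H\in\fspaceL^2$, and hence
\begin{align*}
\calM\at{V+tH}=\calM\at{V}+t\,\skp{V-\calA^2{V}}{H}+t^2\calM\at{H}.
\end{align*}

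First I would combine this with the affine-linear term above to obtain the exact identity
\begin{align*}
\calN\at{W+tH}=\calN\at{W}+t\,\skp{\at{V-\calA^2{V}}+\at{W_\sh-\calA^2{W_\sh}}}{H}+t^2\calM\at{H}
\end{align*}
for every $H\in\fspaceL^2$. The bracket collapses to $V-\calA^2{V}+W_\sh-\calA^2{W_\sh}=W-\calA^2{W}$, which lies in $\fspaceL^2$ by \eqref{Rem:HProps.Eqn4}. Dividing by $t$ and letting $t\to0$ yields G\^{a}teaux differentiability with $\partial\calN\at{W}=W-\calA^2{W}$, linearity and boundedness of the derivative being automatic from the $\fspaceL^2$-pairing. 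The expansion \eqref{Lem:NProps.Eqn6} is then the same identity read at $t=1$ with $W=W_1$ and $H=W_2-W_1$, using the symmetry of $\skp{\cdot}{\cdot}$.

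The remaining, slightly more delicate, point is shift invariance, because the separate terms in \eqref{Eqn:DefM} involve the non-translation-invariant profile $W_\sh$. Here I would argue by density. On the subset $\calH_1=\{W:W-W_\sh\in\fspaceL^1\cap\fspaceL^2\}$ the alternative formula \eqref{Eqn:MFormula} applies, so $\calN\at{W}=\tfrac12\int_\Rset W^2-\at{\calA{W}}^2\dint\phase$; since $\calA$ commutes with translations and Lebesgue measure is translation invariant, this expression is manifestly shift invariant, and the shift $\tau_s$ maps $\calH_1$ into itself because the correction $\tau_s W_\sh-W_\sh$ has compact support. To pass from $\calH_1$ to all of $\calH$ I would use that \eqref{Lem:NProps.Eqn6}, together with the bound $0\leq\calM\at{V}\leq\norm{V}_2^2$ from Lemma \ref{Lem:AProps}, makes $\calN$ locally Lipschitz on $\calH$, while $\tau_s$ acts as an $\fspaceL^2$-isometry on the fibre $W-W_\sh$; approximating an arbitrary $W\in\calH$ by $W_n\in\calH_1$ and passing to the limit in $\calN\at{\tau_s W_n}=\calN\at{W_n}$ then gives $\calN\at{\tau_s W}=\calN\at{W}$.

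I expect the main obstacle to be exactly this last reconciliation: checking that the regularised functional $\calN$, built from $W_\sh$-dependent compactly supported corrections, genuinely coincides with the translation-invariant integral on a dense set, and that its continuity is strong enough to transport the symmetry to the whole affine space $\calH$. By contrast, the differentiability and the expansion formula are, as sketched, purely algebraic consequences of the quadratic structure and of the self-adjointness of $\calA$, and should require no limiting arguments at all.
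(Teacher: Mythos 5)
Your proposal is correct and follows essentially the same route as the paper: both arguments rest on the quadratic structure of $\calM$ together with the self-adjointness of $\calA$ (the paper computes $\partial\calN$ and verifies \eqref{Lem:NProps.Eqn6} separately, while you derive both from one exact second-order expansion, a purely cosmetic difference), and both establish shift invariance by approximating $W$ with profiles whose deviation from $W_\sh$ is integrable—in the paper the truncations $\chi_{\ccinterval{-n}{n}}\at{W-W_\sh}+W_\sh$—so that \eqref{Eqn:MFormula} applies, then passing to the limit. Your explicit justification of that limit via the local Lipschitz bound coming from \eqref{Lem:NProps.Eqn6} and $0\leq\calM\at{V}\leq\norm{V}_2^2$ merely spells out what the paper leaves implicit.
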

\begin{proof}
A direct computation with $W\in\calH$ and $\delta{W}\in\fspaceL^2$ shows
\begin{align*}
\skp{\partial\calN\at{W}}{\delta{W}}
&=%
\bat{\skp{W-W_\sh}{\delta{W}}
-\skp{\calA{W}-\calA{W_\sh}}{\calA{\delta{W}}}}
+\skp{W_\sh-\calA^2{W_\sh}}{\delta{W}}
\\&=%
\skp{W-W_\sh}{\delta{W}}-\skp{\calA^2{W}-\calA^2{W_\sh}}{\delta{W}}
+\skp{W_\sh-\calA^2{W_\sh}}{\delta{W}}
\\&=%
\skp{W-\calA^2}{\delta{W}},
\end{align*}
and this gives \eqref{Lem:NProps.Eqn4}. Towards the shift invariance we approximate $W$ by
\begin{align*}
W_n=\chi_{\ccinterval{-n}{n}}\at{W-W_\sh}+W_\sh
\end{align*}
where $\chi_{\ccinterval{-n}{n}}$ is the indicator function of the interval
$\ccinterval{-n}{n}$. Then we use \eqref{Eqn:MFormula} for $W_n$ to find
$\calL\at{W_n}=\calL\at{W_n\at{\cdot+\bar\phase}}$ for all shifts $\bar\phase$, and passing
to the limit $n\to\infty$ gives the desired result. Finally, by definition we have
\begin{align*}
\calN\at{W_2}-\calN\at{W_1}
&=%
\calM\at{W_2-W_\sh}-\calM\at{W_1-W_\sh}+\skp{W_2-W_1}{W_\sh-\calA^2W_\sh}
\end{align*}
and
\begin{align*}
\calM\at{W_2-W_\sh}-\calM\at{W_1-W_\sh}=
\calM\at{W_2-W_1}+
\skp{W_2-W_1}{W_1-W_\sh-\calA^2{W_1}+\calA^2{W_\sh}},
\end{align*}
so \eqref{Lem:NProps.Eqn6} follows from adding both identities.
\end{proof}
To conclude this section we consider the functional
\begin{align*}
\calP\at{W}=%
\int\limits_\Rset{}\Psi\at{\calA{W}}\dint\phase,
\end{align*}
which gives the non-quadratic part of the action integral \eqref{E:DefAction2}
\begin{lemma}
\label{Lem:PProps}
$\calP$ is well defined on $\calH$ with
\begin{align}
\label{Eqn:LowerEstimateForG}
\ul{c}\norm{\calA{W}-\sgn\at{\calA{W}}}_2
\leq
\calP\at{W}
\leq
\ol{c}\norm{\calA{W}-\sgn\at{\calA{W}}}_2
\end{align}
for some constants $\ul{c}$ and $\ol{c}$ that depend only on $\norm{AW}_\infty$. Moreover,
$\calP$ is G\^{a}teaux differentiable on $\calH$ with derivative
$\partial\calP\at{W}=\calA^2{W}-\calA\Phi^\prime\at{\calA{W}}$.
\end{lemma}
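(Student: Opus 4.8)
Throughout, the statement only involves the function $u\deq\calA W$, and I would begin by recording its properties. By Lemma~\ref{Lem:AProps} and \eqref{Rem:HProps.Eqn4}, for $W\in\calH$ the function $u$ is bounded and continuous and satisfies $u-W_\sh\in\fspaceL^2$; in particular $u$ tends to $\pm1$ at $\pm\infty$. Hence the sign of $u$ eventually agrees with that of $W_\sh$, so $\sgn u-W_\sh$ has bounded support and
\[
u-\sgn u=\at{u-W_\sh}-\at{\sgn u-W_\sh}\in\fspaceL^2.
\]
I abbreviate $M\deq\norm{\calA W}_\infty$, so that $u$ takes values in the compact interval $\ccinterval{-M}{M}$.

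For the two-sided estimate I would reduce \eqref{Eqn:LowerEstimateForG} to the pointwise comparison $\ul c\,\at{u-\sgn u}^2\leq\Psi\at u\leq\ol c\,\at{u-\sgn u}^2$ on $\ccinterval{-M}{M}$ and then integrate. To get the comparison, consider the quotient $q\at s\deq\Psi\at s/\at{s-\sgn s}^2$. By \cond{G} the points $\pm1$ are global minima of $\Psi$, so $\Psi^\prime\at{\pm1}=0$, and a Taylor expansion using $\Psi^{\prime\prime}\at{\pm1}>0$ from \cond{X} gives $q\at s\to\tfrac12\Psi^{\prime\prime}\at{\pm1}>0$ as $s\to\pm1$; moreover $q$ extends continuously across $s=0$ with value $\Psi\at0>0$. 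Thus $q$ extends to a continuous and, by \cond{G} and \cond{X}, strictly positive function on the compact set $\ccinterval{-M}{M}$, hence is bounded above and below by positive constants $\ol c=\ol c\at M$ and $\ul c=\ul c\at M$. Integrating the comparison over $\Rset$ and using $u-\sgn u\in\fspaceL^2$ yields the finiteness of $\calP\at W=\int_\Rset\Psi\at u\dint\phase$ (so $\calP$ is well defined and, by \cond{G}, nonnegative) together with the two-sided control of $\calP\at W$ by $\norm{u-\sgn u}_2^2$ recorded in \eqref{Eqn:LowerEstimateForG}; the constants depend only on $M=\norm{\calA W}_\infty$.

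For the G\^{a}teaux derivative I fix $\delta W\in\fspaceL^2$ and write $w\deq\calA\delta W\in\fspaceL^2\cap\fspaceL^\infty$, so that $\calP\at{W+t\,\delta W}=\int_\Rset\Psi\at{u+t w}\dint\phase$ by linearity of $\calA$. Since $\Psi\in\fspaceC^1$ the difference quotient converges pointwise to $\Psi^\prime\at u\,w$, and from \eqref{E:DefPsi} one has $\Psi^\prime\at r=r-\Phi^\prime\at r$. Passing to the limit under the integral then gives
\[
\lim_{t\to0}\tfrac1t\bat{\calP\at{W+t\,\delta W}-\calP\at W}=\skp{\calA W-\Phi^\prime\at{\calA W}}{\calA\delta W}=\skp{\calA^2W-\calA\Phi^\prime\at{\calA W}}{\delta W},
\]
where the last equality is the symmetry of $\calA$ from Lemma~\ref{Lem:AProps}; this proves $\partial\calP\at W=\calA^2W-\calA\Phi^\prime\at{\calA W}$, which lies in $\fspaceL^2$ because $\Psi^\prime\at{\pm1}=0$ forces $\Psi^\prime\at{\calA W}=\calA W-\Phi^\prime\at{\calA W}\in\fspaceL^2$ and $\calA$ is bounded on $\fspaceL^2$.

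The main obstacle is precisely the justification of passing to the limit under the integral, since the natural majorant $\abs w$ is only in $\fspaceL^2$ and not in $\fspaceL^1$. I would handle it by dominated convergence: by the mean value theorem the difference quotient equals $\Psi^\prime\at{u+\theta t w}\,w$ with $\theta\in\ccinterval01$, and for $\abs t\leq1$ the argument stays in a fixed compact interval. Splitting $\Rset$ into the set where $u$ is close to $\pm1$ and $\abs w$ is small, and its complement $S^c$ — which has finite measure because $u\to\pm1$ at infinity and $w\in\fspaceL^2$ — one uses on the former that $\abs{\Psi^\prime\at s}\leq C\abs{s-\sgn s}$ near $\pm1$ (again from $\Psi^\prime\at{\pm1}=0$ and $\Psi^{\prime\prime}\at{\pm1}$ finite) to dominate the integrand by $C\at{\abs{u-\sgn u}+\abs w}\abs w\in\fspaceL^1$, while on $S^c$ the bound $\abs{\Psi^\prime}\leq C^\prime$ together with $\int_{S^c}\abs w\dint\phase\leq\norm w_2\,\abs{S^c}^{1/2}<\infty$ suffices. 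This provides a $t$-independent $\fspaceL^1$ majorant and completes the argument.
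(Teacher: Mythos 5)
Your proposal is correct and follows essentially the same route as the paper: the two-sided estimate comes from the pointwise comparison $\ul{c}\at{u-\sgn{u}}^2\leq\Psi\at{u}\leq\ol{c}\at{u-\sgn{u}}^2$ on the compact range of $\calA{W}$ (which the paper extracts directly from the genericity condition (X)), and the derivative formula comes from differentiating under the integral and invoking the symmetry of $\calA$ together with $\Psi^\prime\at{r}=r-\Phi^\prime\at{r}$. The only difference is one of detail: the paper compresses the differentiation step into ``a direct calculation,'' whereas you construct the $t$-independent $\fspaceL^1$ majorant explicitly via the finite-measure splitting --- a welcome elaboration rather than a different method.
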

\begin{proof}
Let $W\in\calH$ be given and recall that $U=\calA{W}$ satisfies
$U\in\calH\cap\fspaceL^\infty$ due to Lemma \ref{Lem:AProps}. Condition \cond{X} provides two
constants $\ul{c}$ and $\ol{c}$ such that
\begin{align*}
\ul{c}\at{u-\sgn{u}}^2\leq\Psi\at{u}\leq\ol{c}\at{u-\sgn{u}}^2\quad\text\and\quad
\end{align*}
holds for all $\abs{u}\leq{\norm{U}_\infty}$, and we conclude that $\calP$ is well defined
and satisfies \eqref{Eqn:LowerEstimateForG}. Finally, \eqref{Rem:HProps.Eqn4} provides
$\calA^2{W}-\calA\Phi^\prime\at{\calA{W}}\in\fspaceL^2$, so both the existence of and the
formula for $\partial\calP$ follow from a direct calculation.
\end{proof}
%
\subsection{Variational setting}%
%
We now introduce the action functional on $\calH$ by
\begin{align}
\label{Eqn:DefL}%
\calL\at{W}=\calN\at{W}+\calP\at{W}.
\end{align}
In virtue of Lemma \ref{Lem:NProps} and Lemma \ref{Lem:PProps} the functional $\calL$ is well
defined, shift invariant, and G\^{a}teaux differentiable with derivative
\begin{align*}
\partial\calL\at{W}=\calA^2W-\calA\Phi^\prime\at{\calA{W}}\in\fspaceL^2,
\end{align*}
and we conclude that each minimizer of $\calL$ in $\calH$ must solve the front equation
\eqref{Norm.Problem.Eqn}.
However, proving the existence of minimizers in $\calH$ turns out to be difficult and
therefore we restrict $\calL$ to the convex subset
\begin{align*}
\calC=\left\{W\in\calH\cap\fspaceW^{1,\infty}%
\;:\;%
\norm{W}_{\infty}\leq{\Ga}
,\quad%
\norm{W^\prime}_{\infty}\leq2{\Ga}\right\}.
\end{align*}
Notice that the ansatz $W\in\calC$ is reasonable due to condition $\cond{I}$ and since the
front equation \eqref{Norm.Problem.Eqn} combined with Lemma \ref{Lem:AProps} implies
$W\in\calH\cap\fspaceW^{1,\,\infty}$.
\bigpar
In order to link fronts to minimizers of $\calL$ in $\calC$ we observe that the properties of
$\Phi^\prime$ and $\calA$ guarantee $\calC$ to be invariant under the $\fspaceL^2$-gradient
flow of $\calL$. To see this we consider the explicit Euler scheme
\begin{align}
\label{Eqn:EulerScheme}
W\mapsto\calT_\la\at{W}=W-\la\partial\calL\at{W}=\at{1-\la}W+\la\calA\Phi^\prime\at{\calA{W}}
\end{align}
with small step size $\la$.
\begin{lemma}
\label{Lem:FrontsAndMinimisers} The set $\calC$ is invariant under the action of $\calT_\la$
for $0<\la<1$. Consequently, each minimizer of $\calL$ in $\calC$ solves the front
equation \eqref{Norm.Problem.Eqn}.
\end{lemma}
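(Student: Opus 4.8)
The plan is to establish the two assertions separately. For the invariance I would show that the nonlinear map $W\mapsto\calA\Phi^\prime\at{\calA W}$ sends $\calC$ into itself; since $\calC$ is convex and, by \eqref{Eqn:EulerScheme}, $\calT_\la\at{W}=\at{1-\la}W+\la\,\calA\Phi^\prime\at{\calA W}$ is a convex combination of $W$ and $\calA\Phi^\prime\at{\calA W}$, the inclusion $\calT_\la\at{W}\in\calC$ for $0<\la<1$ is then immediate. For the consequence I would run the standard descent argument: moving from a minimizer along $-\partial\calL$ strictly decreases $\calL$ to first order unless $\partial\calL$ vanishes, and such moves stay in $\calC$ by the first part, so any minimizer must be a critical point, which is precisely a solution of \eqref{Norm.Problem.Eqn}.

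First I would fix $W\in\calC$ and set $U=\calA W$. By Lemma \ref{Lem:AProps} the averaging operator contracts the sup-norm, so $\norm{U}_\infty\leq\norm{W}_\infty\leq\Gamma$, that is, $U$ takes values in $\ccinterval{-\Gamma}{\Gamma}$. Condition \cond{I} then forces $\Phi^\prime\at{U}$ into $\ccinterval{-\Gamma}{\Gamma}$ as well, whence $\norm{\calA\Phi^\prime\at{U}}_\infty\leq\norm{\Phi^\prime\at{U}}_\infty\leq\Gamma$. For the derivative I would invoke the difference formula from Lemma \ref{Lem:AProps}, namely $\at{\calA\Phi^\prime\at{U}}^\prime\at\phase=\Phi^\prime\at{U}\at{\phase+\tfrac12}-\Phi^\prime\at{U}\at{\phase-\tfrac12}$, which yields $\norm{\at{\calA\Phi^\prime\at{U}}^\prime}_\infty\leq2\norm{\Phi^\prime\at{U}}_\infty\leq2\Gamma$. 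Together with \eqref{Rem:HProps.Eqn4}, which places $\calA\Phi^\prime\at{\calA W}$ in $\calH$, these two bounds give $\calA\Phi^\prime\at{\calA W}\in\calC$. Since $\calC$ is convex (an intersection of the affine space $\calH\cap\fspaceW^{1,\infty}$ with two sup-norm balls), the convex combination $\calT_\la\at{W}$ lies in $\calC$ for every $0<\la<1$.

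For the second assertion I would let $W_\ast$ minimize $\calL$ over $\calC$ and suppose, towards a contradiction, that $\partial\calL\at{W_\ast}=W_\ast-\calA\Phi^\prime\at{\calA W_\ast}\neq0$ in $\fspaceL^2$. By \eqref{Eqn:EulerScheme} the curve $\la\mapsto\calT_\la\at{W_\ast}=W_\ast-\la\,\partial\calL\at{W_\ast}$ stays in $\calC$ for $0<\la<1$, and the G\^ateaux differentiability of $\calL$ from the variational setting would give
\[
\tfrac{\ddiff}{\ddiff\la}\Big|_{\la=0^+}\calL\at{\calT_\la\at{W_\ast}}=\skp{\partial\calL\at{W_\ast}}{-\partial\calL\at{W_\ast}}=-\norm{\partial\calL\at{W_\ast}}_2^2<0.
\]
Thus $\calL\at{\calT_\la\at{W_\ast}}<\calL\at{W_\ast}$ for all sufficiently small $\la>0$, contradicting the minimality of $W_\ast$ in $\calC$. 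Hence $\partial\calL\at{W_\ast}=0$, i.e.\ $W_\ast=\calA\Phi^\prime\at{\calA W_\ast}$, which is exactly the front equation \eqref{Norm.Problem.Eqn}.

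The only step calling for genuine care will be the self-mapping property $\calA\Phi^\prime\at{\calA W}\in\calC$ of the second paragraph; everything else is either convexity bookkeeping or the routine descent argument. This step is where the invariant-set condition \cond{I} is indispensable: without the a priori bound $\norm{\Phi^\prime\at{\calA W}}_\infty\leq\Gamma$ one could control neither the sup-norm of $\calA\Phi^\prime\at{\calA W}$ nor, through the difference formula, its derivative, and the convex combination defining $\calT_\la$ would not be guaranteed to respect the two ball constraints that cut out $\calC$.
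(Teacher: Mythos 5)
Your proposal is correct and follows essentially the same route as the paper: the invariance part is identical (the bound $\norm{\calA\Phi^\prime\at{\calA W}}_\infty\leq\Gamma$ from condition (I) and the averaging estimates of Lemma \ref{Lem:AProps}, the derivative bound $2\Gamma$ from the difference formula, membership in $\calH$ via \eqref{Rem:HProps.Eqn4}, then convexity of $\calC$). For the second assertion the paper passes to the limit $\la\to0$ and invokes invariance of $\calC$ under the $\fspaceL^2$-gradient flow, while you argue the same point more explicitly by a first-order descent along the Euler step $\calT_\la$, which is just a sharper write-up of the identical idea (and your formula $\partial\calL\at{W}=W-\calA\Phi^\prime\at{\calA W}$ is the correct one, consistent with \eqref{Eqn:EulerScheme}).
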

\begin{proof}
For $W\in\calC$ let $P=\Phi^\prime\at{\calA{W}}$ and recall that
$\calA{W},\;P,\;\calA{P}\in\calH$ according to \eqref{Rem:HProps.Eqn4}. Combining \cond{I}
with $\norm{\calA{W}}_\infty\leq\Ga$ and
$\at{\calA{P}}^\prime=P\at{\cdot+1/2}+P\at{\cdot+1/2}$ gives
\begin{align*}
\norm{P}_\infty,\;\norm{\calA{P}}_\infty\leq{\Ga}
,\qquad\norm{\at{\calA{P}}^\prime}_\infty\leq2{\Ga},
\end{align*}
and hence $\calA{P}\in\calC$. Since $\calC$ is convex we also have $\calT_\la\at{W}\in\calC$
for all $0<\la<1$, and passing to the limit $\la\to0$ we then establish the invariance of
$\calC$ under the $\fspaceL^2$-gradient flow of $\calL$. In particular, each minimizer of
$\calL$ in $\calC$ must be a stationary point for the gradient flow of $\calL$ and hence a
solution to the front equation.
\end{proof}
To complete the existence proof for fronts it remains to show that $\calL$ attains its
minimum in $\calC$. We prove this in the next section by using the direct approach, that
means we construct minimizers as limits of minimizing sequences.
\par
A particular problem we have to overcome in the subsequent analysis is that $\calL$ is
\emph{not} coercive on $\calH$. In fact, as illustrated in Figure \ref{fig:sep_phase} there
exist sequences $\at{W_n}_n\subset\calC$ with extending plateaus at $-1$ or $+1$. These
plateaus contribute neither to $\calN$ nor $\calP$ but may imply
\begin{align*}
\norm{W_n-W_\sh\at{\phase_n+\cdot}}_2\xrightarrow{n\to\infty}\infty
\end{align*}
for all choices of the relative shifts $\phase_n$. Heuristically it is clear that the cartoon
from Figure \ref{fig:sep_phase} cannot be prototypical for action minimizing sequences, but
in order to proof this we need a better understanding of sequences with bounded action.

\begin{figure}[ht!]
\centering{%
\includegraphics[width=0.7\textwidth, draft=\figdraft]%
{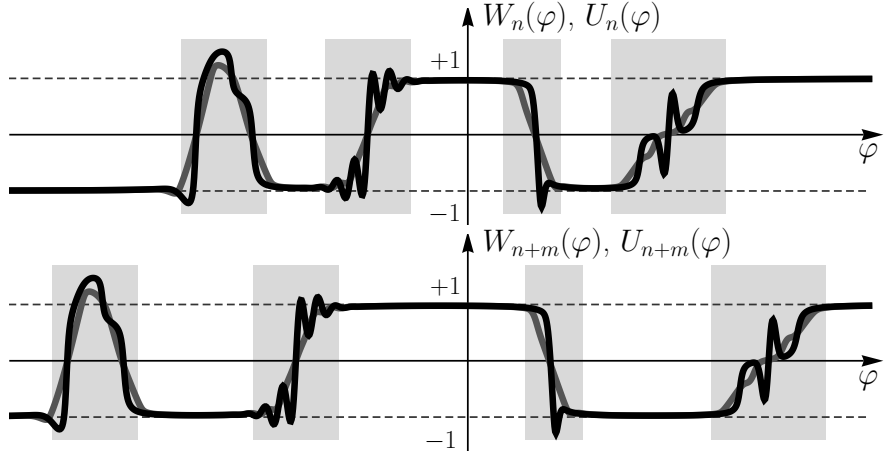}%
\caption{%
Sketch of a sequence with bounded action and two extending plateaus at $\pm1$: graphs of
$W_n$ and $U_n=\calA{W_n}$ in Black in Gray, respectively. The shaded areas indicate
corresponding separations of phases with $I_{n,j}=\phase_{n,j}+J_j$ as in Lemma
\ref{Lem:SepPhasesForSequ}.
}%
\label{fig:sep_phase}
}%
\end{figure}
We conclude with a remark about the necessity of the graph condition \cond{G} and refer to
\S\ref{sec:num} for numerical examples.
\begin{remark}
\label{Rem:UnboundedL} %
Suppose that $\Psi$ satisfies $\cond{M}$, $\cond{X}$, and $\Psi\at{\pm1}=0$, but violates
\cond{G} because there is some $u_\ast$ with $\abs{u_\ast}<\Ga$ and $\Psi\at{u_\ast}<0$. Then
$\calL$ is unbounded from below.
\end{remark}
\begin{proof}
We define a sequence $\at{W_n}_n\subset\calH$ of piecewise linear profiles by
$W_n\at{\phase}=u_\ast$ for $\abs{\phase}\leq{n}$ and $W_n\at{\phase}=\sgn\phase$ for
$\abs{\phase}\geq{n+1/\Ga}$. By construction, $W_n-\calA^2{W}_n$ is supported in
$I_n\cup\at{-I_n}$ with $I_n=\ccinterval{n-1}{n+1+\Ga}$, and a direct calculation shows
\begin{align*}
\calL\at{W_n}\leq{C}+2\at{n-1}\Psi\at{u_\ast}\xrightarrow{n\to\infty}-\infty,
\end{align*}
where $C$ is some constant independent of $n$.
\end{proof}%
%
%
\subsection{Separation of phases for sequences with bounded $\calP$}\label{sec:proof:sop}%
%
To characterize the qualitative properties of a profile $U=\calA{W}$ with $W\in\calC$ we
interpret $U<\tfrac{1}{2}$ and $U>\tfrac{1}{2}$ as \emph{negative phase} and \emph{positive
phase}, respectively, and regard intervals in which $U$ takes intermediate values as
\emph{transition layers}. Obviously, adjacent plateaus of different height are separated by a
transition layer and each $U$ must exhibit at least one transition layer as it connects $-1$
to $+1$.
\par%
We next exploit the uniform $\fspaceL^{\infty}$-bound for $W\in\calC$ to derive a lower
bound for the $\calP$-contribution of each transition layer. To this end we introduce
\begin{align*}
Z_U=\{\bar\phase\;:\;\abs{U\at{\bar\phase}}\leq\tfrac{1}{2}\}.
\end{align*}
which is nonempty, closed and bounded as $U\in\calC$ is continuous with $U\at\phase\to\pm1$
as $\phase\to\pm\infty$.
\begin{remark}
\label{Rem:MinCostOfZero}
There exist constants $\bar{\eta}>0$ and $\bar{\mu}>0$ such that
\begin{align*}
\int\limits_{\bar\phase-\bar\eta}^{\bar\phase+\bar\eta}
\Psi\at{U\at\phase}\dint\phase
>%
\bar\mu
\end{align*}
for each $U\in\calC$ and $\bar{\phase}\in{Z}_U$.
\end{remark}
\begin{proof}
This follows since $U\in\calC$ implies
\begin{math}
\abs{U\at{\phase_2}-U\at{\phase_1}}\leq%
\int_{\phase_2}^{\phase_1}\abs{U^\prime\at\phase}\dint\phase
\leq%
2\Gamma\abs{\phase_2-\phase_1}
\end{math}. %
In particular, we have $\abs{U\at{\phase}}\leq3/4$ for all
$\abs{\phase-\bar\phase}\leq\bar\eta=1/\at{8\Ga}$ and the claim follows with
$\bar\mu=\bar\sigma/\at{2\bar\eta}$ and $\bar\sigma=\sup_{\abs{u}\leq3/4}\Psi\at{u}>0$.
\end{proof}
In order to show that each function $U$ possesses a finite number of transition layers we
introduce the following definition. A \emph{separation of phases} for a given profile
$U\in\calC$ is a finite collection of closed intervals (transition layers)
$I_1,\,\tdots,\,I_m$, $m\geq1$, such that
\begin{enumerate}
\item
the intervals are disjoint and ordered, i.e.,
\begin{align*}
\min{I_{1}}<\max{I_{1}}<\min{I_2}<\tdots<%
\max{I_{m-1}}<\min{I_{m}}<\max{I_{m}},
\end{align*}
\item
$Z_U$ is contained in
$I_1\cup\tdots\cup{I_m}$.
\item
for each interval $I_j$ there exists $\bar\phase_j\in{Z_U}$ such that with
$\ccinterval{\bar\phase_j-\bar\eta}{\bar\phase_j+\bar\eta}\subset{I_j}$,
\end{enumerate}
\begin{lemma}
\label{Lem:ExSepPhases}
Let $W\in\calC$ be given and set $U=\calA{W}$. Then there exists a separation of phases
$I_1,\,\tdots,\,I_m$ for $U$ with
\begin{align*}
m\leq\mathrm{floor}\at{\calP\at{W}/\bar\mu}\quad\text{and}\quad
2\bar\eta\leq\abs{I_j}\leq4\calP\at{W}\bar{\eta}/\bar\mu
\end{align*}
for all $j=1\tdots{m}$.
\end{lemma}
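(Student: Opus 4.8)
The plan is to construct the separation of phases explicitly as the connected components of the closed $\bar\eta$-neighbourhood of $Z_U$. Set $S=\bigcup_{\bar\phase\in Z_U}[\bar\phase-\bar\eta,\bar\phase+\bar\eta]$. Since $U=\calA{W}$ is continuous with $U\at\phase\to\pm1$ as $\phase\to\pm\infty$, the set $Z_U$ is nonempty and compact, so $S$ is a compact neighbourhood of $Z_U$. Every point of $S$ lies within $\bar\eta$ of some $\bar\phase\in Z_U$, and that $\bar\phase$ sits in the same component of $S$; hence each component meets $Z_U$ and, being connected, contains the entire window $[\bar\phase-\bar\eta,\bar\phase+\bar\eta]$ of any of its $Z_U$-points. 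I will take the ordered components $I_1,\dots,I_m$ of $S$ as the separation of phases. Axioms (1) and (2) are then immediate, since the components are disjoint, ordered, and cover $Z_U\subset S$; axiom (3) holds because each $I_j$ contains a window $[\bar\phase_j-\bar\eta,\bar\phase_j+\bar\eta]$ with $\bar\phase_j\in Z_U$.

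To bound the number of components I select one $Z_U$-point $\bar\phase_j$ in each $I_j$. The associated windows are pairwise disjoint because the $I_j$ are, and by Remark \ref{Rem:MinCostOfZero} each contributes more than $\bar\mu$ to the integral of $\Psi\at{U}$. Since $\Psi\geq0$ by the graph condition \cond{G}, summing over $j$ and comparing with the full integral yields $m\bar\mu<\int_\Rset\Psi\at{U}\dint\phase=\calP\at{W}$, whence $m\leq\lfloor\calP\at{W}/\bar\mu\rfloor$. The same disjoint-window estimate shows a priori that $S$ has only finitely many components, because $\calP\at{W}<\infty$ on $\calH\supset\calC$ by Lemma \ref{Lem:PProps}, so the object $I_1,\dots,I_m$ is well defined.

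The lower bound $\abs{I_j}\geq2\bar\eta$ is clear, as $I_j$ contains a full window. For the upper bound I pack disjoint windows inside a single component $I_j=[a,b]$ of length $\ell$. Consider the equally spaced points $s_i=a+4\bar\eta\at{i-1}$ lying in $[a,b]$: each $s_i\in S$ is within $\bar\eta$ of some $\bar\psi_i\in Z_U\cap I_j$, and since consecutive samples are $4\bar\eta$ apart the centres satisfy $\bar\psi_{i+1}-\bar\psi_i\geq2\bar\eta$, so the windows $[\bar\psi_i-\bar\eta,\bar\psi_i+\bar\eta]\subset I_j$ are disjoint. There are more than $\ell/\at{4\bar\eta}$ such samples, so Remark \ref{Rem:MinCostOfZero} together with $\Psi\geq0$ gives $\calP\at{W}\geq\int_{I_j}\Psi\at{U}\dint\phase>\tfrac{\ell}{4\bar\eta}\,\bar\mu$, i.e. $\abs{I_j}<4\calP\at{W}\bar\eta/\bar\mu$.

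The main obstacle is precisely this upper length bound. A connected component of $S$ is only a chain of overlapping $\bar\eta$-windows, so a priori its $Z_U$-points may be as far as $2\bar\eta$ apart, and one cannot directly tile $I_j$ by cost-carrying windows. The factor $4$ rather than $2$ is forced here: to convert the length of $I_j$ into genuinely disjoint windows whose $\Psi$-cost may be summed, the sampling step must exceed the required centre separation $2\bar\eta$ by a further $2\bar\eta$, in order to absorb the $\bar\eta$-displacement between each sample point and its nearby $Z_U$-point. Throughout, the non-negativity of $\Psi$ supplied by \cond{G} is what permits the local window contributions to be compared with the global functional $\calP$.
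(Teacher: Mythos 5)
Your proof is correct, and it reaches both bounds by a construction that differs from the paper's. The paper builds the intervals greedily and algorithmically: it takes $\bar\phase_1=\min Z_U$, surrounds it by an interval of radius $2\bar\eta$, then repeatedly selects the smallest point of $Z_U$ lying outside all intervals constructed so far; successive selected points are at least $2\bar\eta$ apart, so their $\bar\eta$-windows are disjoint and Remark \ref{Rem:MinCostOfZero} stops the iteration after $m\leq\mathrm{floor}\at{\calP\at{W}/\bar\mu}$ steps --- the same disjoint-window counting you use, one window per component. The paper then merges overlapping intervals, and this is where the routes genuinely diverge: for the paper the upper length bound is pure bookkeeping, since each merged interval is a union of at most $m$ overlapping intervals of length $4\bar\eta$ and hence has length at most $4m\bar\eta\leq 4\calP\at{W}\bar\eta/\bar\mu$, whereas your connected components of the $\bar\eta$-neighbourhood $S$ of $Z_U$ are not controlled by the count alone, which is exactly why you need the additional packing argument with samples spaced $4\bar\eta$ apart. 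That packing argument is carried out correctly (the possible touching of consecutive windows at a single endpoint is harmless for the integral estimate, and each window does lie in the same component as its sample point). What your version buys is a canonical, iteration-free decomposition for which properties (1)--(3) of a separation of phases are immediate, and it even explains why the constant $4$ is natural rather than an artefact of merging; what the paper's version buys is that a single counting argument yields the cardinality bound, the length bound, and finiteness simultaneously, with no need for a second quantitative step. Both proofs rest on the same two ingredients: Remark \ref{Rem:MinCostOfZero} and the nonnegativity of $\Psi$ guaranteed by the graph condition (G).
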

\begin{proof}
We define a finite number of points $\bar{\phase}_j\in{Z_U}$ and intervals
$I_j=\oointerval{\bar{\phase}_j-2\bar\eta}{\bar{\phase}_j-2\bar\eta}$ iteratively as follows:
$\bar{\phase}_1$ is the smallest element of $Z_U$, i.e. $U\at\phase<-\tfrac{1}{2}$ for all
$\phase<\bar{\phase}_1$. If $Z_U\setminus{I_1}$ is empty we stop the iteration; otherwise we
choose $\bar{\phase}_2$ to be the smallest zero of $Z_U$ outside of $I_1$. Then we have
$\bar{\phase}_2-\bar{\phase}_1\geq2\bar\eta$, so Remark \ref{Rem:MinCostOfZero} yields
\begin{align*}
\calP\at{W}\geq
\int\limits_{\bar{\phase}_1-\bar\eta}^{\bar{\phase}_1+\bar\eta}
\Psi\at{U\at\phase}\dint\phase+
\int\limits_{\bar{\phase}_2-\bar\eta}^{\bar{\phase}_2+\bar\eta}
\Psi\at{U\at\phase}\dint\phase
\geq{2}\bar\mu.
\end{align*}
If possible, we now define $\bar{\phase}_3$ as the minimum of $Z_U\setminus\at{I_1\cup{I_2}}$
and proceed iteratively until the iteration stops after
$m\leq\mathrm{floor}\at{\calP\at{W}/\bar\mu}$ steps. By construction, we have
$Z_U\subset\bigcup_{j=1}^m{I_j}$ and $\abs{I_j}\leq4\bar\eta$ for all $j$ but the intervals
$I_j$ may overlap. Finally, we obtain the desired separation of phases by merging overlapping
intervals.
\end{proof}
Our main result in this section concerns sequences $\at{W_n}_n\in\calC$ with bounded
$\calP\at{W_n}$. It guarantees, roughly speaking, the existence of \emph{compatible}
transitions layers which $\at{i}$ have the same length, $\at{ii}$ separate the same phases,
and $\at{iii}$ depart from each other. For an illustration we refer to Figure
\ref{fig:sep_phase}.
\begin{lemma}
\label{Lem:SepPhasesForSequ}
Let $\at{W_n}_{n}\subset\calC$ be a sequence with $\limsup_{n\to\infty}\calP\at{W_n}<\infty$.
Then there exists a not relabelled subsequence along with a finite number of intervals
$J_1,\,\tdots,\,J_m$ with the following properties:
\begin{enumerate}
\item
Each interval $J_j$ is centred around zero, i.e., $J_j=\ccinterval{-\eta_j}{\eta_j}$ for
some $0<\eta_j<\infty$.
\item
For each $n$ there exist shifts $\phase_{n,1}<\tdots<\phase_{n,m}$ such that
\begin{enumerate}
\item
$\phase_{n,1}+J_1,\,\tdots,\,\phase_{n,m}+J_m$ is a separation of phases for
$U_n=\calA{W_n}$,
\item
$\phase_{n,j+1}-\phase_{n,j}\to\infty$ as $n\to\infty$ for all
$j=1{\tdots}m-1$.
\end{enumerate}
\item
There exists a choice of signs $s_0,\,\tdots,\,s_{m}$ with $s_0=-1$, $s_m=1$, and
$s_j\in\{-1,\,+1\}$ such that
\begin{align*}
\begin{array}{lclclclcl}
\sgn\at{U_n\at\phase}&=&s_0
&\quad\text{for}\quad&%
&&\phase&<&\phase_{n,1}-\eta_{1}
\\%
\sgn\at{U_n\at\phase}&=&s_1
&\quad\text{for}\quad&%
\phase_{n,1}+\eta_1&<&\phase&<&\phase_{n,2}-\eta_{2}
\\%
&&&\tdots&&&&&
\\%
\sgn\at{U_n\at\phase}&=&s_{m-1}
&\quad\text{for}\quad&%
\phase_{n,m-1}+\eta_{m-1}&<&\phase&<&\phase_{n,m}-\eta_m
\\%
\sgn\at{U_n\at\phase}&=&s_{m+1}
&\quad\text{for}\quad&%
\phase_{n,m}+\eta_m&<&\phase&&
\end{array}
\end{align*}
hold for all $n$.
\end{enumerate}
\end{lemma}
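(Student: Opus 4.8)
The plan is to combine Lemma~\ref{Lem:ExSepPhases} with a chain of subsequence extractions and a merging step that fuses transition layers whose mutual distance stays bounded. First I would apply Lemma~\ref{Lem:ExSepPhases} to every $W_n$ to obtain a separation of phases $I_{n,1},\dots,I_{n,m_n}$ for $U_n=\calA{W_n}$. With $P_\infty=\limsup_n\calP\at{W_n}<\infty$, the two bounds of that lemma show, after discarding finitely many indices, that $m_n$ is uniformly bounded and that every length $\abs{I_{n,j}}$ lies in a fixed interval $\ccinterval{2\bar\eta}{L}$, where $L$ depends only on $P_\infty$, $\bar\eta$, and $\bar\mu$. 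Passing to a subsequence I may assume $m_n\equiv m$, and the anchoring condition in the definition of a separation of phases supplies for each layer a point $\phase_{n,j}\in Z_{U_n}$ with $\ccinterval{\phase_{n,j}-\bar\eta}{\phase_{n,j}+\bar\eta}\subset I_{n,j}$. These anchors are ordered, $\phase_{n,1}<\dots<\phase_{n,m}$, and disjointness of the $I_{n,j}$ forces the gaps $g_{n,j}=\phase_{n,j+1}-\phase_{n,j}$ to exceed $2\bar\eta$.

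Next I would carry out the merging. A diagonal subsequence makes each gap sequence $\at{g_{n,j}}_n$ either converge to a finite limit or diverge to $+\infty$. Calling $j$ and $j+1$ \emph{tied} when $g_{n,j}$ converges partitions $\{1,\dots,m\}$ into maximal blocks of consecutive indices, with diverging gaps between distinct blocks. For each block I replace its layers by their convex hull $\wt{I}_{n,j}$; since the within-block span $\sum_k g_{n,k}$ stays bounded and each $\abs{I_{n,k}}\le L$, the merged layer has uniformly bounded length, still covers the block's part of $Z_{U_n}$, and contains one of the original $\bar\eta$-intervals. Relabelling the blocks as $1,\dots,m$ and keeping one anchor $\phase_{n,j}$ per merged layer, the anchors remain ordered and now satisfy $\phase_{n,j+1}-\phase_{n,j}\to\infty$, which is claim~(2)(b).

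It then remains to centre the windows and to read off the signs. Setting $\eta_j=\max\bigl\{\bar\eta,\ \sup_n\sup_{\phase\in\wt{I}_{n,j}}\abs{\phase-\phase_{n,j}}\bigr\}<\infty$ and $J_j=\ccinterval{-\eta_j}{\eta_j}$ yields claim~(1) at once. Each translate $\phase_{n,j}+J_j$ contains $\wt{I}_{n,j}$ as well as $\ccinterval{\phase_{n,j}-\bar\eta}{\phase_{n,j}+\bar\eta}$ with $\phase_{n,j}\in Z_{U_n}$, so the translates cover $Z_{U_n}$ and meet the anchoring condition. Because the gaps diverge, $\phase_{n,j+1}-\phase_{n,j}>\eta_j+\eta_{j+1}$ for large $n$, so discarding a further finite initial segment makes the translates disjoint and ordered for every remaining $n$; together these facts say that $\phase_{n,1}+J_1,\dots,\phase_{n,m}+J_m$ is a separation of phases for $U_n$, which is claim~(2)(a). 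Finally, on each of the $m+1$ complementary regions $Z_{U_n}$ is empty, hence $\abs{U_n}>\tfrac12$ there; continuity of $U_n$ (Lemma~\ref{Lem:AProps}) on such a connected region makes $\sgn U_n$ constant, while $U_n\at\phase\to\pm1$ as $\phase\to\pm\infty$ pins the two outermost signs to $s_0=-1$ and $s_m=+1$. A last subsequence renders these $m+1$ signs independent of $n$, which is claim~(3).

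The step I expect to be the main obstacle is this merging, together with the tension between two of the requirements: the windows $J_j$ must be identical for all $n$ and centred at the origin, yet the translates $\phase_{n,j}+J_j$ must remain pairwise disjoint for each individual $n$. Enlarging the layers to uniform, $n$-independent windows generically destroys disjointness at small $n$, and the only mechanism that restores it is the divergence of the gaps secured by the merging. Getting the order of the extractions right---stabilize $m_n$, classify and merge the gaps, enlarge to fixed windows, then stabilize the signs---and peeling off the correct finite initial segment is the delicate bookkeeping on which the argument rests.
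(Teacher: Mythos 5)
Your proposal is correct and follows essentially the same route as the paper's proof: apply Lemma~\ref{Lem:ExSepPhases}, pass to a subsequence on which the number $m$ of layers is constant, merge layers whose mutual distances remain bounded, enlarge to fixed $n$-independent centred windows, and extract further subsequences to secure disjointness and a constant choice of signs. The only difference is organizational---you classify all gap sequences at once and merge maximal tied blocks, while the paper runs an iterative level-by-level merging algorithm---but the underlying argument is the same.
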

\begin{proof}
Thanks to Lemma \ref{Lem:ExSepPhases} we can extract a subsequence such that each $U_n$ has a
separation of phases that consists of $m$ intervals $I_{n,1},\tdots,{I_{n,m}}$ where $m$ is
independent of $n$. We  denote the centre of $I_{n,j}$ by $\phase_{n,j}$, set
$J_{n,j}=I_{n,j}-\phase_{n,j}$, and notice that $\ul{c}\leq\abs{J_{n,j}}\leq\ol{c}$ and
$\phase_{n,j+1}-\phase_{n,j}>\ul{c}$ for some constants $\ol{c}>\ul{c}>0$ independent of $n$
and $j$. In particular, the intervals $J_{j}=\bigcup_{n}J_{n,j}$ have finite length
$\ol{c}\leq\abs{J_j}\leq\ol{c}$.
\par
Our strategy for the proof is to refine $m$, the subsequence $\at{W_n}_n$, the phase shifts
$\phase_{n,j}$, and the intervals $J_j$ in several steps. To this end we start the following
algorithm at level $1$.
\begin{enumerate}
\item[]%
Level k
\begin{enumerate}
\item[]
If $m=k$, then we stop the algorithm.
\item[]
If $\phase_{n,k+1}-\phase_{n,k}\to\infty$ as $n\to\infty$ along a subsequence, then
we extract this subsequence and jump to level $k+1$.
\item[] %
If $m<k$ and $0<\sup_{n}\at{\phase_{n,k+1}-\phase_{n,k}}<\infty$, then we merge $J_k$
and $J_{k+1}$ as follows: At first we choose $\tilde{J}_{k}$ sufficiently large such
that
\begin{align*}
\tilde{J}_{k}\supseteq{J}_{k}\cup%
{\bigcup_{n}\at{\phase_{n,k+1}-\phase_{n,k}+{J}_{k+1}}}%
\end{align*}
Secondly we define $\tilde{\phase}_{n,k}=\phase_{n,k}$,
$\tilde{m}=m-1$ and
\begin{align*}
\begin{array}{lclclclcll}
\tilde{J}_j&=&J_j&\quad\text{and}\quad&\tilde{\phase}_{n,j}
&=&\phase_{n,j}&\quad\text{for}\quad&{j<k},\\
\tilde{J}_j&=&J_{j+1}&\quad\text{and}\quad&\tilde{\phase}_{n,j}&=&
\phase_{n,j+1}&\quad\text{for}\quad&{j>k}.
\end{array}
\end{align*}
Finally we restart level $k$ with $\tilde{m}$,
$\tilde{\phase}_{n,j}$, and $\tilde{J}_j$ instead of  $m$,
$\phase_{n,j}$, and $J_j$.
\end{enumerate}
\end{enumerate}
This algorithm stops after a finite number of steps when $m-k=0$. It provides intervals $J_j$
and phase shifts $\phase_{n,j}$ for $j=1\tdots{m}$ and $n\in{N}$ with
$\lim_{n\to\infty}\phase_{n,j+1}-\phase_{n,j}=\infty$ for all $j$. By extracting subsequences
we can also ensure that, for each $n$, the intervals $\phase_{n_j}+J_{n,j}$ are pairwise
disjoint and provide therefore a separation of phases for $U_n$. Finally, by extracting
further subsequences if necessary we guarantee the existence of a choice of signs.
\end{proof}
%
%
\subsection{Existence of minimizers for $\calL$}%
%
We now finish the existence proof for fronts.
\begin{theorem}
\label{Theo:Minimiser}%
$\calL$ attains its minimum on $\calC$ and each minimizer is a front.
\end{theorem}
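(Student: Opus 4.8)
\section*{Proof sketch}

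The plan is to run the direct method of the calculus of variations for $\calL$ on $\calC$, using the separation of phases from Lemma~\ref{Lem:SepPhasesForSequ} to supply the compactness that the functional itself lacks. Since Lemma~\ref{Lem:FrontsAndMinimisers} already guarantees that every minimizer of $\calL$ in $\calC$ solves the front equation \eqref{Norm.Problem.Eqn} and hence is a front, the entire content of the theorem is the \emph{existence} of a minimizer; the only genuine difficulty is the failure of coercivity caused by the spreading plateaus sketched in Figure~\ref{fig:sep_phase}.

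First I would fix a minimizing sequence $\at{W_n}_n\subset\calC$ with $\calL\at{W_n}\to\ell\deq\inf_\calC\calL$ and observe that $\calP\at{W_n}$ stays bounded. Indeed $\calP\geq0$ by the graph condition \cond{G}, while $\calN$ is bounded below on $\calC$: in the representation \eqref{Eqn:DefM} the term $\calM\at{W-W_\sh}$ is non-negative by Lemma~\ref{Lem:MProps}, the middle term is a fixed constant, and the linear term pairs $W-W_\sh$ only against the compactly supported function $W_\sh-\calA^2{W_\sh}$, so it is bounded uniformly on $\calC$ because $\norm{W}_\infty\leq\Ga$. Hence $\limsup_n\calP\at{W_n}<\infty$, and Lemma~\ref{Lem:SepPhasesForSequ} applies: after passing to a subsequence I obtain intervals $J_1,\tdots,J_m$, shifts $\phase_{n,1}<\tdots<\phase_{n,m}$ with $\phase_{n,j+1}-\phase_{n,j}\to\infty$, and signs $s_0=-1,\tdots,s_m=+1$ recording the plateaus of $U_n=\calA{W_n}$.

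Next I would centre on the transitions. The uniform bounds $\norm{W_n}_\infty\leq\Ga$ and $\norm{W_n^\prime}_\infty\leq2\Ga$ that define $\calC$ make each shifted family $\at{W_n\at{\cdot+\phase_{n,j}}}_n$ equibounded and equicontinuous, so by Arzel\`a--Ascoli a further subsequence converges locally uniformly to some $W_\infty^{\at{j}}$ connecting $s_{j-1}$ to $s_j$, with the defining inequalities of $\calC$ passing to the limit. Because $s_0\neq s_m$ there is at least one genuinely heteroclinic transition $j^\ast$ with $s_{j^\ast-1}=-1$ and $s_{j^\ast}=+1$, and I would set $W_\infty\deq W_\infty^{\at{j^\ast}}$. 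A technical point is to verify $W_\infty\in\calH$, i.e.\ $W_\infty-W_\sh\in\fspaceL^2$: this follows by combining the quadratic lower bound $\Psi\at{u}\geq\ul{c}\at{u-\sgn{u}}^2$ from the proof of Lemma~\ref{Lem:PProps}, which through Fatou's lemma pins down the plateau values of $\calA{W_\infty}$, with the coercive contribution of $\calM$ inside $\calN$. Granting this, $W_\infty\in\calH\cap\calC$ is an admissible competitor.

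The crux is then the lower bound $\ell\geq\calL\at{W_\infty}$. Here I would exploit that the transitions drift infinitely far apart while the averaging kernel of $\calA$ is short range, so the contributions of distinct transition regions decouple in the limit: weak lower semicontinuity of $\calM$ from Lemma~\ref{Lem:MProps}, lower semicontinuity of $\calP$ via $\Psi\geq0$ and Fatou's lemma, and the non-negativity of the far-away transitions together with the intervening plateaus combine to give $\liminf_n\calL\at{W_n}\geq\calL\at{W_\infty}$. Since $W_\infty\in\calC$, the reverse inequality $\calL\at{W_\infty}\geq\ell$ is automatic, so $W_\infty$ minimizes $\calL$ on $\calC$, and Lemma~\ref{Lem:FrontsAndMinimisers} certifies that it -- and every other minimizer -- is a front. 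I expect this decoupling, a no-dichotomy step in the spirit of concentration compactness, to be the main obstacle, precisely because $\calN$ is nonlocal and not sign-definite, so one must control carefully that no action escapes to the departing transitions; this is exactly what the quantitative separation of phases of Lemma~\ref{Lem:SepPhasesForSequ} was built to rule out.
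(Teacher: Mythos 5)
Your proposal follows the paper's skeleton exactly: bound $\calP\at{W_n}$, invoke Lemma \ref{Lem:SepPhasesForSequ}, shift so that the $-1\to+1$ transition sits at the origin, extract a locally uniform limit $W_\infty$ from the Lipschitz bounds defining $\calC$, show $W_\infty\in\calH\cap\calC$, prove the liminf inequality, and finish with Lemma \ref{Lem:FrontsAndMinimisers} and Fatou. However, the two crux steps are backed by assertions that fail as stated. First, there is no ``coercive contribution of $\calM$'': the Fourier symbol of $\calM$ is $1-\varrho\at{k}^2$, which vanishes at $k=0$, so $\calM$ controls no portion of the $\fspaceL^2$-norm by itself; this degeneracy is precisely why $\calL$ is not coercive. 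What actually closes the step $W_\infty\in\calH$ in the paper is the choice of the \emph{sign profile} $S_n=\sgn\at{\calA{W_n}}$ as reference point: the quadratic expansion \eqref{Lem:NProps.Eqn6} with $W_1=S_n$, $W_2=W_n$ gives $\calM\at{W_n-S_n}\leq C$, because the separation of phases forces $S_n-\calA{S_n}$ and $S_n-\calA^2S_n$ to be supported in a set of uniformly bounded measure, so that $\calN\at{S_n}$ and the pairing $\skp{W_n-S_n}{S_n-\calA^2S_n}$ are uniformly bounded. Combining this with $\norm{\calA{W_n}-\calA{S_n}}_2\leq C$ (which follows from the $\calP$-bound via \eqref{Eqn:LowerEstimateForG}) and the identity $\norm{V}_2^2=2\calM\at{V}+\norm{\calA{V}}_2^2$ yields $\norm{W_n-S_n}_2\leq C$ and hence $W_\infty\in\calH$. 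The reference must be $S_n$ and not $W_\sh$: before superfluous transitions are excluded, $\norm{W_n-W_\sh}_2$ may genuinely diverge, so no expansion around $W_\sh$ can succeed.

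Second, for the lower bound you invoke ``weak lower semicontinuity of $\calM$ from Lemma \ref{Lem:MProps}'', but there is nothing to apply it to: weak $\fspaceL^2$-convergence of $W_n-W_\sh$ would require that sequence to be bounded in $\fspaceL^2$, which is exactly what non-coercivity denies. The paper circumvents this by localization rather than weak convergence: with the cutoffs $E_KW=\chi_{\ccinterval{-K}{K}}\at{W-W_\sh}+W_\sh$ one expands $\calN\at{W_n}$ around $E_KW_n$ via \eqref{Lem:NProps.Eqn6}, discards $\calM\at{W_n-E_KW_n}\geq0$ (only non-negativity of $\calM$ enters, not weak lower semicontinuity), passes $n\to\infty$ using that $E_KW_n\to E_KW_\infty$ \emph{strongly} in $\fspaceL^2$ and that $E_KW_n-\calA^2\at{E_KW_n}$ is supported in $\ccinterval{-K-1}{K+1}$, and only then sends $K\to\infty$. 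Your Fatou argument for $\calP$ is correct and is what the paper does, and your heuristic that the departing transitions must not carry away action is the right intuition; but without the $S_n$-expansion and the $E_K$-localization the two $\calN$-steps remain open, and they are the heart of the proof.
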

\begin{proof}
\emph{\underline{Step 0.}} %
We start with some notations. For a given minimizing sequence  $\at{W_n}_n\subset\calC$ we
define
\begin{align*}
U_n=\calA{W_n},\qquad{S_n}=\sgn{U_n},
\end{align*}
and for each $K>0$ we introduce the operator
\begin{align*}
E_K:\calC\to\calC,\quad E_KW=\chi_{\ccinterval{-K}{K}}\at{W-W_\sh}+W_\sh.
\end{align*}
Here $\chi_{\ccinterval{-K}{K}}$ denotes the usual indicator function, so we have
$\norm{E_KW-W}_2\to0$ as $K\to\infty$ for each $W\in\calC$. Finally, within this proof $C$
always denotes a positive constant that is independent of $n$ and $K$, but the value of $C$
may change from line to line.
\par
\emph{\underline{Step 1.}}
By assumption and $\calM\at{W_n-W_\sh}\geq0$ we have
\begin{align}
\label{Theo:Minimiser.ZEqn1}
\calP\at{W_n}
=%
{\calL\at{W_n}}-{\calN\at{W_n}}
\leq%
{C}+\norm{W-W_\sh}_{\infty}\norm{W_\sh-\calA^2W_\sh}_1
\leq{C}.
\end{align}
Therefore we can extract (a not relabelled) subsequence for which Lemma
\ref{Lem:SepPhasesForSequ} provides a finite number of intervals $J_1\tdots{J}_m$, sequences
of phase shifts $\at{\phase_{n,\,j}}_n$ and a choice of signs $\at{s_{n,\,j}}_n$. There exits
at least one $1\leq{j_\ast}\leq{m}$ such that $s_{j_\ast-1}=-1$ and $s_{j_\ast}=+1$, and
since $\calL$ is invariant under shifts we can assume that $\phase_{n,\,j_\ast}=0$. With
$J_{j_\ast}=\ccinterval{-\eta_{j_\ast}}{\eta_{j_\ast}}$ and due to
$\lim_{n\to\infty}\phase_{n,\,j+1}-\phase_{n,\,j}=\infty$ we then have
\begin{align*}
\limsup\limits_{n\to\infty}
U_{n}\at\phase\leq-\tfrac{1}{2}
\quad\text{for}\quad
\phase<-\eta_{j_\ast},
\qquad
\liminf\limits_{n\to\infty}U_{n}\at\phase\geq\tfrac{1}{2}
\quad\text{for}\quad
\phase>\eta_{j_\ast}.
\end{align*}
By compactness we can extract a  further subsequence such that $W_n\rightharpoonup{W_\infty}$
weakly$\star$ in $\fspaceW^{1,\,\infty}$. In particular, $W_n$ converges to $W_\infty$
uniformly on each compact interval, and hence
\begin{align}
\label{Theo:Minimiser.EqnA1}%
E_K{W_n}\xrightarrow{n\to\infty}E_KW_\infty
,\quad%
E_K{U_n}\xrightarrow{n\to\infty}E_KU_\infty
\quad%
\text{strongly in $\fspaceL^2$ for all $K$}.
\end{align}
\par
\emph{\underline{Step 2.}} %
Towards $W_\infty\in\calC$ we show that $W_n-S_n$ is uniformly bounded in $\fspaceL^2$. The
first observation is that \eqref{Theo:Minimiser.ZEqn1} combined with
\eqref{Eqn:LowerEstimateForG} implies
\begin{align}
\label{Theo:Minimiser.EqnB1}
\norm{U_n-S_n}_2\leq{C}.
\end{align}
The second observation is that both $S_n-{\calA}S_n$ and $S_n-\calA^2S_n$ are supported in
the $1$-neighbourhood of $I_n=\bigcup_{j=1}^m\at{J_j+\phase_{n,\,j}}$. Therefore,
$\abs{I_n}\leq\sum_{j=1}^{m}\abs{J_j}\leq{C}$ yields
\begin{align*}
\norm{S_n-{\calA}S_n}_2\leq{C}
,\qquad
\norm{S_n-\calA^2S_n}_1\leq{C}
,\qquad%
\abs{\calN\at{S_n}}=\abs{\skp{S_n-\calA^2S_n}{S_n}}\leq{C},
\end{align*}
and by \eqref{Theo:Minimiser.EqnB1} we find
\begin{align}
\label{Theo:Minimiser.EqnB2}%
\norm{U_n-{\calA}S_n}_2\leq\norm{U_n-S_n}_2+\norm{S_n-{\calA}S_n}_2\leq{C}.
\end{align}
Exploiting \eqref{Lem:NProps.Eqn6} for $W_2=W_n$ and $W_1=S_n$ gives
\begin{align*}
\calM\at{W_n-S_n}
&=%
{\calN\at{W_n}}-\calN\at{S_n}-\skp{W_n-S_n}{S_n-\calA^2S_n}
\\&\leq%
\calL\at{W_n}+\abs{\calN\at{S_n}}+
\at{\norm{W_n}_\infty+\norm{S_n}_\infty}\norm{S_n-\calA^2S_n}_1\leq{C},
\end{align*}
and with \eqref{Theo:Minimiser.EqnB2} we obtain
\begin{align}
\label{Theo:Minimiser.EqnB3}%
\norm{W_n-S_n}^2_2&=\calM\at{W_n-S_n}+\norm{U_n-{\calA}S_n}^2_2
\leq{C}.
\end{align}
Now we are able to show $W_\infty\in\calC$. From \eqref{Theo:Minimiser.EqnB3} we infer that
\begin{align*}
\norm{E_KW_n-W_\sh}_2
\leq\norm{E_KW_n-E_KS_n}_2+\norm{E_K{S_n}-E_KW_\sh}_2\leq{C}+\norm{E_K{S_n}-E_KW_\sh}_2,
\end{align*}
and with $S_n\at{\phase}\to{W_\sh}\at\phase$ as $n\to\infty$ for all $\phase\neq{J}_{j_\ast}$
we find
\begin{align*}
\norm{E_KW_\infty-W_\sh}_2\leq{C}.
\end{align*}
Passing to the limit $K\to\infty$ now gives $W\in\calH$, and $W\in\calC$ follows because
$W_\infty$ was defined as weak$\star$ limit in $\fspaceW^{1,\,\infty}$.
\par
\emph{\underline{Step 3.}} %
There remains to show that $W_\infty$ minimizes $\calL$. From \eqref{Lem:NProps.Eqn6} we
infer that
\begin{align}
\label{Theo:Minimiser.EqnC1}
\calN\at{W_n}
=%
\calN\at{E_KW_n}+\calM\at{W_n-E_KW_n}+
\skp{W_n-E_KW_n}{E_KW_n-\calA^2\at{E_KW_n}}
\end{align}
and
\begin{align}
\label{Theo:Minimiser.EqnCa}
\begin{split}
\abs{\calN\at{E_K{W}_n}-\calN\at{E_K{W}_\infty}}
\leq&\quad%
\calM\at{E_KW_n-E_KW_\infty}\\&+
\norm{E_KW_n-E_KW_\infty}_2\norm{E_KW_\infty-\calA^2\at{E_KW_\infty}}_2
\end{split}
\end{align}
hold for all $K$ and $n\in\Nset\cup\{\infty\}$. Combining \eqref{Theo:Minimiser.EqnCa} with
\eqref{Theo:Minimiser.EqnA1} gives
\begin{align*}
\calN\at{E_K{W}_n}\xrightarrow{n\to\infty}\calN\at{E_K{W}_\infty}.
\end{align*}
Moreover, since $E_K{W_n}-\calA^2\at{E_K{W_n}}$ is supported in $\ccinterval{-K-1}{K+1}$ for
all $n$, we also have
\begin{align*}
\skp{W_n-E_KW_n}{E_KW_n-\calA^2\at{E_KW_n}}
\xrightarrow{n\to\infty}%
\skp{W_\infty-E_KW_\infty}{E_KW_\infty-\calA^2\at{E_KW_\infty}},
\end{align*}
and passing to the limit $n\to\infty$ in \eqref{Theo:Minimiser.EqnC1} provides
\begin{align}
\label{Theo:Minimiser.EqnC2}
\liminf\limits_{n\to\infty}\calN\at{W_n}
\geq%
\calN\at{E_KW_\infty}+
\skp{W_\infty-E_KW_\infty}{E_KW_\infty-\calA^2\at{E_KW_\infty}},
\end{align}
where we used that $\calM\at{W_n-E_KW_n}\geq0$ according to Lemma \ref{Lem:MProps}. On the
other hand, evaluating \eqref{Theo:Minimiser.EqnC1} for $n=\infty$ gives
\begin{align*}
\calN\at{W_\infty}=\calN\at{E_KW_\infty}+\calM\at{W_\infty-E_KW_\infty}+
\skp{W_\infty-E_KW_\infty}{E_KW_\infty-\calA^2\at{E_KW_\infty}}
\end{align*}
and due to $\calM\at{W_\infty-E_KW_\infty}\to0$ as $K\to\infty$ we find
\begin{align}
\label{Theo:Minimiser.EqnC3}
\calN\at{E_KW_\infty}+
\skp{W_\infty-E_KW_\infty}{E_KW_\infty-\calA^2\at{E_KW_\infty}}
\xrightarrow{K\to\infty}\calN\at{W_\infty}.
\end{align}
The combination of \eqref{Theo:Minimiser.EqnC2} and \eqref{Theo:Minimiser.EqnC3} reveals
\begin{align}
\label{Theo:Minimiser.EqnC4}
\liminf\limits_{n\to\infty}\calN\at{W_n}\geq\calN\at{W_\infty},
\end{align}
and Fatou's Lemma provides
\begin{align}
\label{Theo:Minimiser.EqnC5}
\liminf\limits_{n\to\infty}\calP\at{W_n}\geq\calP\at{W_\infty}
\end{align}
due to $\Psi\geq0$ and since $W_n$ converges to $W_\infty$ pointwise. Adding
\eqref{Theo:Minimiser.EqnC4} and \eqref{Theo:Minimiser.EqnC5} we conclude that $W_\infty$ is
in fact a minimizer of $\calL$, and Lemma \ref{Lem:FrontsAndMinimisers} guarantees that
$W_\infty$ solves the front equation \eqref{Norm.Problem.Eqn}.
\end{proof}
We conclude with some remarks.
\begin{enumerate}
\item
Theorem \ref{Intro:MainTheo} follows by combining Lemma \ref{Lem:JumpCond}, Lemma
\ref{Lem:Normalisation}, Lemma \ref{Lem:FrontsAndMinimisers}, Remark
\ref{Rem:UnboundedL}, and Theorem \ref{Theo:Minimiser}.
\item
The assertions of Theorem \ref{Theo:Minimiser} can be sharpened as follows. For each
minimizing sequence $\at{W_n}_n$ we have equality signs in both
\eqref{Theo:Minimiser.EqnC4} and \eqref{Theo:Minimiser.EqnC5}, and hence
\begin{align*}
\lim\limits_{K\to\infty}\lim\limits_{n\to\infty}\calM\at{W_n-E_KW_n}=0.
\end{align*}
This implies that there is only one interval $J_1=J_{j_\ast}$ and in turn that
$W_n-W_\infty\to0$ strongly in $\fspaceL^2$. In this sense each minimizing sequence obeys
exactly one transition from negative phase to positive phase.
\item
If $\Phi^\prime$ is increasing in $\ccinterval{-1}{+1}$ we can improve the existence
result for fronts as follows. We choose $\Ga=1$ in \cond{I} and consider the set
\begin{align*}
\wt{\calC}=\{W\in\calC\;:\;W^\prime\geq0\}.
\end{align*}
Then $\wt{\calC}$ is an invariant set for the gradient flow of $\calL$ and again one can
show that $\calL$ restricted to $\wt{\calC}$ attains it minimum (a proof tailored to
monotone profiles is given in \cite{HR10b}). In particular, in this case there exist
action minimizing fronts with monotone profile $W$.
\end{enumerate}
%
%
%
\section{Approximation of fronts}\label{sec:num}
%
%
In this section we illustrate the analytical results from \S\ref{sec:proof} by some numerical
simulations. To this end we discretize the Euler scheme for the gradient flow of $\calL$, see \eqref{Eqn:EulerScheme}, as
follows:
\begin{samepage}
\begin{enumerate}
\item
Fix a finite interval $\ccinterval{-\Len}{+\Len}$ and introduce equidistant grid
points by $\phase_k=-\Len+2k\Len/D$, where $k=0\tdots{D}$ and $D\in\Nset$ is large.
\item
Approximate each profile $W\in\calC$ by the discrete vector $W_i=W\at{\phase_i}$ and
impose the boundary conditions $W_i=-1$ and $W_i=+1$ for $i<0$ and $i>D$, respectively.
\item
Replace the integrals in the definition of $\calA$ by Riemann sums with respect to the
$\phase_i$'s.
\item Choose $\la$ sufficiently small and initialize the iteration
    \eqref{Eqn:EulerScheme} with shock initial data $W_i=\sgn{\phase_i}$.
\end{enumerate}
\end{samepage}
\begin{figure}[ht!]
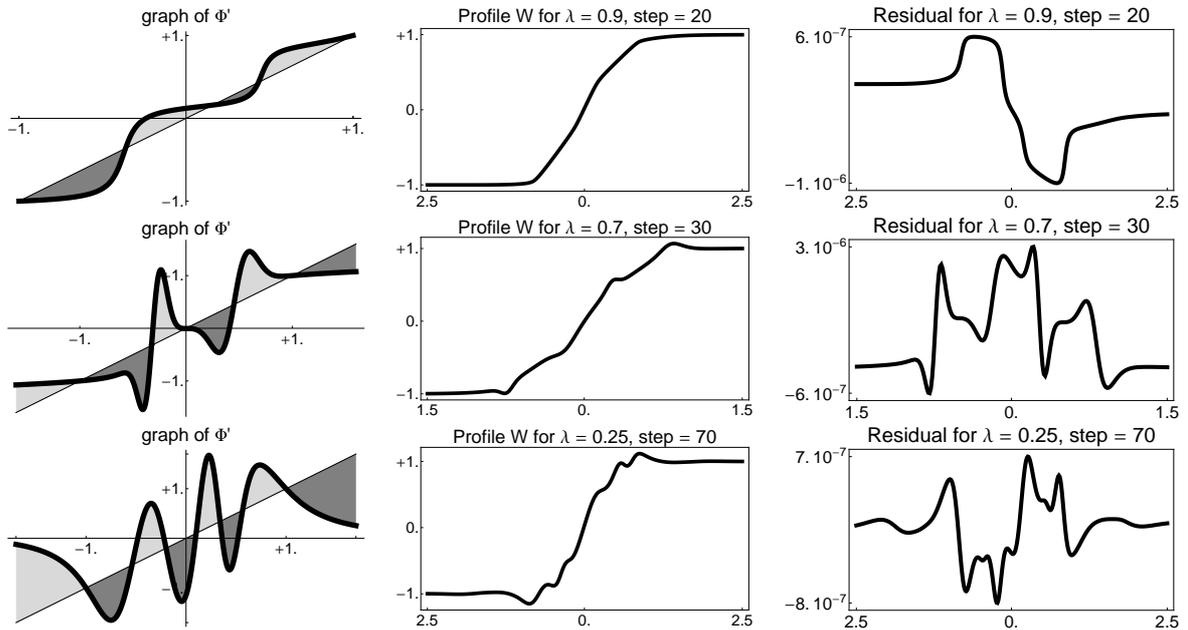
%
\centering{%
\begin{minipage}[c]{0.285\textwidth}%
\includegraphics[width=\textwidth, draft=\figdraft]%
{\figfile{ex_1_force}}%
\end{minipage}%
\hspace{0.025\textwidth}%
\begin{minipage}[c]{0.285\textwidth}%
\includegraphics[width=\textwidth, draft=\figdraft]%
{\figfile{ex_1_prof}}%
\end{minipage}%
\hspace{0.025\textwidth}%
\begin{minipage}[c]{0.315\textwidth}%
\includegraphics[width=\textwidth, draft=\figdraft]%
{\figfile{ex_1_res}}%
\end{minipage}%
\\%
\begin{minipage}[c]{0.285\textwidth}%
\includegraphics[width=\textwidth, draft=\figdraft]%
{\figfile{ex_2_force}}%
\end{minipage}%
\hspace{0.025\textwidth}%
\begin{minipage}[c]{0.285\textwidth}%
\includegraphics[width=\textwidth, draft=\figdraft]%
{\figfile{ex_2_prof}}%
\end{minipage}%
\hspace{0.025\textwidth}%
\begin{minipage}[c]{0.315\textwidth}%
\includegraphics[width=\textwidth, draft=\figdraft]%
{\figfile{ex_2_res}}%
\end{minipage}%
\\%
\begin{minipage}[c]{0.285\textwidth}%
\includegraphics[width=\textwidth, draft=\figdraft]%
{\figfile{ex_3_force}}%
\end{minipage}%
\hspace{0.025\textwidth}%
\begin{minipage}[c]{0.285\textwidth}%
\includegraphics[width=\textwidth, draft=\figdraft]%
{\figfile{ex_3_prof}}%
\end{minipage}%
\hspace{0.025\textwidth}%
\begin{minipage}[c]{0.315\textwidth}%
\includegraphics[width=\textwidth, draft=\figdraft]%
{\figfile{ex_3_res}}%
\end{minipage}%
}%
\caption{%
Three examples with admissible potential as in Assumption \ref{Ass:Pot} where $\Phi^\prime$
is plotted in the invariant interval. During the iteration the profiles $W$ converge to a
front.
}%
\label{Fig:Num1}%
\end{figure}%
\begin{figure}[ht!]
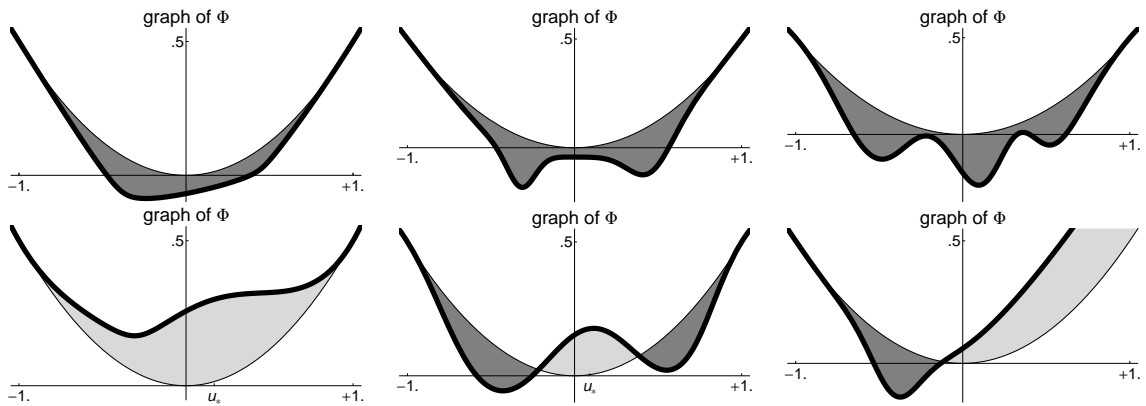
%
\centering{%
\begin{minipage}[c]{0.285\textwidth}%
\includegraphics[width=\textwidth, draft=\figdraft]%
{\figfile{ex_1_pot}}%
\end{minipage}%
\hspace{0.025\textwidth}%
\begin{minipage}[c]{0.285\textwidth}%
\includegraphics[width=\textwidth, draft=\figdraft]%
{\figfile{ex_2_pot}}%
\end{minipage}%
\hspace{0.025\textwidth}%
\begin{minipage}[c]{0.285\textwidth}%
\includegraphics[width=\textwidth, draft=\figdraft]%
{\figfile{ex_3_pot}}%
\end{minipage}%
\\%
\begin{minipage}[c]{0.285\textwidth}%
\includegraphics[width=\textwidth, draft=\figdraft]%
{\figfile{ex_4_pot}}%
\end{minipage}%
\hspace{0.025\textwidth}%
\begin{minipage}[c]{0.285\textwidth}%
\includegraphics[width=\textwidth, draft=\figdraft]%
{\figfile{ex_5_pot}}%
\end{minipage}%
\hspace{0.025\textwidth}%
\begin{minipage}[c]{0.285\textwidth}%
\includegraphics[width=\textwidth, draft=\figdraft]%
{\figfile{ex_6_pot}}%
\end{minipage}%
}%
\caption{%
Potentials $\Phi$ with front parabolas for the examples from Figure \ref{Fig:Num1}
(top row) and \ref{Fig:Num3} (bottom row).
}%
\label{Fig:Num2}%
\end{figure}%
\begin{figure}[ht!]
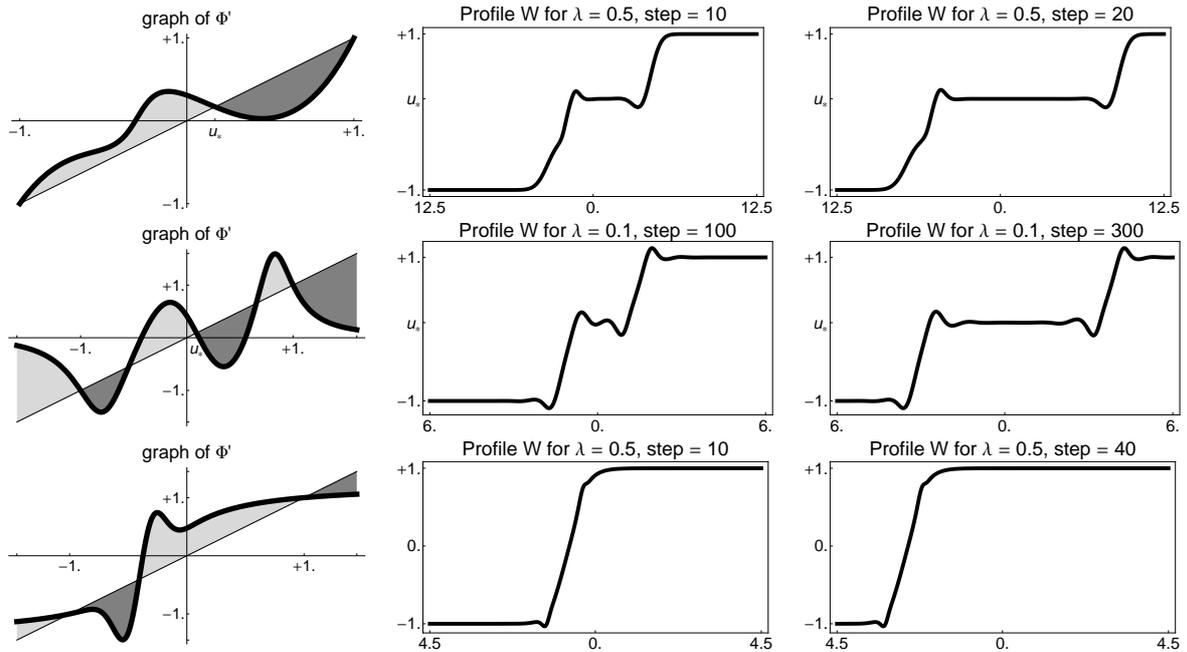
%
\centering{%
\begin{minipage}[c]{0.285\textwidth}%
\includegraphics[width=\textwidth, draft=\figdraft]%
{\figfile{ex_4_force}}%
\end{minipage}%
\hspace{0.025\textwidth}%
\begin{minipage}[c]{0.3\textwidth}%
\includegraphics[width=\textwidth, draft=\figdraft]%
{\figfile{ex_4_prof_1}}%
\end{minipage}%
\hspace{0.025\textwidth}%
\begin{minipage}[c]{0.3\textwidth}%
\includegraphics[width=\textwidth, draft=\figdraft]%
{\figfile{ex_4_prof_2}}%
\end{minipage}%
\\%
\begin{minipage}[c]{0.285\textwidth}%
\includegraphics[width=\textwidth, draft=\figdraft]%
{\figfile{ex_5_force}}%
\end{minipage}%
\hspace{0.025\textwidth}%
\begin{minipage}[c]{0.3\textwidth}%
\includegraphics[width=\textwidth, draft=\figdraft]%
{\figfile{ex_5_prof_1}}%
\end{minipage}%
\hspace{0.025\textwidth}%
\begin{minipage}[c]{0.3\textwidth}%
\includegraphics[width=\textwidth, draft=\figdraft]%
{\figfile{ex_5_prof_2}}%
\end{minipage}%
\\%
\begin{minipage}[c]{0.285\textwidth}%
\includegraphics[width=\textwidth, draft=\figdraft]%
{\figfile{ex_6_force}}%
\end{minipage}%
\hspace{0.025\textwidth}%
\begin{minipage}[c]{0.3\textwidth}%
\includegraphics[width=\textwidth, draft=\figdraft]%
{\figfile{ex_6_prof_1}}%
\end{minipage}%
\hspace{0.025\textwidth}%
\begin{minipage}[c]{0.3\textwidth}%
\includegraphics[width=\textwidth, draft=\figdraft]%
{\figfile{ex_6_prof_2}}%
\end{minipage}%
}%
\caption{%
Three counterexamples for the existence of action minimizing fronts. The first two examples
satisfy the macroscopic constraints but violate the graph condition \cond{G}, so the 
iteration minimizes the action via an extending plateau. The third example violates the 
constraint $\Phi\at{+1}=\Phi\at{-1}$, so
the profiles converge to one of the asymptotic states.
}%
\label{Fig:Num3}%
\end{figure}%
In numerical simulations, the resulting iteration scheme has good convergence properties and
decreases the action provided that $\la$ is sufficiently small and $L$ is sufficiently large.
Three examples with normalized front data and $\Phi$ as in Assumption \ref{Ass:Pot} are shown
in Figure \ref{Fig:Num1}, where $\Phi^\prime$ is always plotted over the invariant interval
$\ccinterval{-\Ga}{\Ga}$. For plots of $\Phi$ see Figure \ref{Fig:Num2}. In the first example
$\Phi^\prime$ is increasing in $\ccinterval{-1}{+1}$ and the front profile $W$ turns out to
be monotone. We refer to the remark at the end of \S\ref{sec:proof} for an explanation, and
to \cite{HR10b} for more examples. The other two examples in Figure \ref{Fig:Num1} illustrate
that the front profiles for non-convex $\Phi$ are in general non-monotone.
\bigpar
In order to illustrate the necessity of the graph condition \cond{G} we present in Figure
\ref{Fig:Num3} simulations for potentials that violate this condition. In the first example
the interval $\ccinterval{-1}{1}$ is invariant under $\Phi^\prime$ but $\Psi$ is negative in
this interval. After some initial iterations the profiles exhibit an extending plateau at
$-1<w_\ast<1$, where $w_\ast$ is the minimizer of $\Psi$ in $\ccinterval{-1}{1}$ and
satisfies $w_\ast=\Phi^\prime\at{w_\ast}$. The onset of the extending plateau is a direct
consequence of the energy landscape of $\calL$, see Remark \ref{Rem:UnboundedL}. The second
simulation provides another counterexample for the existence of action minimizing fronts.
Here $\Psi$ has the correct sign close to $\pm1$ but attains again a negative minimum in
$-1<w_\ast<1$. As before, the profiles minimize their action by converging to the `global
minimizer' $W\equiv{w_\ast}$. Finally, the third example illustrates what happens if the
asymptotic states violate the macroscopic constraints. More precisely, here we violate
$\eqref{Lem:JumpCond.Eqn2}$ due to $\Phi\at{+1}>\Phi\at{-1}$, and observe that the profiles
converge to the global minimizer $W\equiv+1$.
%
%
\section*{Acknowledgements}%
%
%
%
\providecommand{\bysame}{\leavevmode\hbox to3em{\hrulefill}\thinspace}
\providecommand{\MR}{\relax\ifhmode\unskip\space\fi MR }
\providecommand{\MRhref}[2]{%
  \href{http://www.ams.org/mathscinet-getitem?mr=#1}{#2}
}
\providecommand{\href}[2]{#2}

\end{document}